\theoremstyle{plain}
\newtheorem{thm}{Theorem}[section]
\theoremstyle{plain}
\newtheorem{cor}[thm]{Corollary}
\theoremstyle{plain}
\newtheorem{lemma}[thm]{Lemma}
\theoremstyle{plain}
\newtheorem{defn}[thm]{Definition}
\theoremstyle{plain}
\newtheorem{prop}[thm]{Proposition}
\theoremstyle{definition}
\newtheorem{remark}[thm]{Remark}
\newtheorem{ex}[thm]{Example}
\numberwithin{equation}{section}
\newcommand{\RR}{\mathbb{R}}
\newcommand{\NN}{\mathbb{N}}
\newcommand{\dx}{\,\ensuremath{\mathrm{d}}x\,}
\newcommand{\ds}{\,\ensuremath{\mathrm{d}}s\,}
\newcommand{\dt}{\,\ensuremath{\mathrm{d}}t\,}
\newcommand{\dxt}{\,\ensuremath{\mathrm{d}}x \, \ensuremath{\mathrm{d}}t}
\newcommand{\dtau}{\,\ensuremath{\mathrm{d}}\tau\,}
\newcommand{\dA}{\,\ensuremath{\mathrm{d}}\mathcal{H}^{d-1}\,}
\renewcommand{\norm}[1]{\lVert#1\rVert}
\newcommand{\duality}[2]{\langle#1,#2\rangle}
\newcommand{\sigmag}{\sigma_{\Gamma}}
\newcommand{\A}{\mathcal{A}}
\newcommand{\B}{\mathcal{B}}
\newcommand{\uu}{\boldsymbol{u}}
\newcommand{\epsilonu}{\varepsilon(\boldsymbol{u})}
\newcommand{\epsilonut}{\varepsilon(\partial_t\uu)}
\newcommand{\oomega}{\boldsymbol{\omega}}
\newcommand{\oomegat}{\partial_t \oomega}
\newcommand{\epsilono}{\varepsilon(\oomega)}
\newcommand{\epsilonot}{\varepsilon(\oomegat)}
\newcommand{\ttheta}{\boldsymbol{\theta}}
\newcommand{\cchi}{\boldsymbol{\chi}}
\newcommand{\ff}{\boldsymbol{f}}
\newcommand{\nnu}{\boldsymbol{\nu}}
\newcommand{\Ws}{W}
\newcommand{\Wzs}{W_0} 
\newcommand{\Vs}{V}
\newcommand{\Hs}{H}
\DeclareMathOperator{\diver}{div}
\newcommand{\into}{\int_{\Omega}}
\newcommand{\intTo}{\int_0^T \!\!\!\into}
\newcommand{\intT}{\int_0^T}
\newcommand{\intt}{\int_0^t}
\newcommand{\ints}{\int_0^s}
\newcommand{\intg}{\int_{\Gamma}}
\newcommand{\U}{\mathcal{U}}
\newcommand{\Uad}{\U_{\text{ad}}}
\newcommand{\Ur}{\U_{R}}
\renewcommand{\S}{\mathcal{S}}
\newcommand{\V}{\mathcal{V}}
\newcommand{\hh}{\boldsymbol{h}}
\newcommand{\yy}{\boldsymbol{y}}
\newcommand{\C}{\mathfrak{C}}
\newcommand{\cc}{\mathfrak{c}}
\newcommand{\dd}{\mathfrak{d}}
\newcommand{\DD}{\mathfrak{D}}
\newcommand{\varphih}{\varphi^{\hh}}
\newcommand{\sigmah}{\sigma^{\hh}}
\newcommand{\uuh}{\uu^{\hh}}
\newcommand{\yyh}{\yy^{\hh}}
\newcommand{\zh}{z^{\hh}}
\newcommand{\xih}{\xi^{\hh}}
\newcommand{\rhoh}{\rho^{\hh}}
\newcommand{\oomegah}{\oomega^{\hh}}
\newcommand{\zetah}{\zeta^{\hh}}
\newcommand{\Phih}{\Phi^{\hh}}
\newcommand{\lambdah}{\lambda^{\hh}}
\newcommand{\wwh}{\boldsymbol{w}^{\hh}}
\newcommand{\muh}{\mu^{\hh}}
\newcommand{\epsilonuh}{\varepsilon(\uu^{\hh})}
\newcommand{\epsilonoh}{\varepsilon(\oomega^{\hh})}
\newcommand{\epsilonwh}{\varepsilon(\wwh)}
\newcommand{\epsilonwth}{\varepsilon(\partial_t\wwh)}
\newcommand{\q}{\mathrm{q}}
\renewcommand{\r}{\mathrm{r}}
\newcommand{\vv}{\mathrm{\mathbf{v}}}
\newcommand{\epsilonv}{\varepsilon(\vv)}
\newcommand{\vvt}{\partial_t\vv}
\newcommand{\epsilonvt}{\varepsilon(\vvt)}
\newcommand{\s}{\mathrm{s}}
\newcommand{\J}{\mathrm{J}}
\newcommand{\VS}{\V_{\S}}
\crefname{lemma}{lemma}{lemmas}
\Crefname{lemma}{Lemma}{Lemmas}
\crefname{prop}{proposition}{proposition}
\Crefname{prop}{Proposition}{Propositions}
\crefname{cor}{corollary}{corollaries}
\Crefname{cor}{Corollary}{Corollaries}
\crefname{remark}{remark}{remarks}
\Crefname{remark}{Remark}{Remarks}
\crefname{thm}{theorem}{theorems}
\Crefname{thm}{Theorem}{Theorems}
\Crefname{section}{Section}{Sections}
\title{Optimal control on a brain tumor growth model with lactate metabolism, viscoelastic effects, and tissue damage}
\author{\large{\textsc{Giulia Cavalleri}} \orcidlink{0009-0006-4154-9659}}
\affil{\normalsize{Dipartimento di matematica “F. Casorati”, Università degli Studi di Pavia,\\
via Ferrata 5, 27100 Pavia, Italy\\
E-mail: \texttt{giulia.cavalleri01@universitadipavia.it}}}
\author{\large{\textsc{Alain Miranville}} \orcidlink{0000-0002-6030-5928}}
\affil{\normalsize{School of Mathematics and Statistics, Henan Normal University, Xinxiang, P. R. China \& Université Le Havre Normandie, Laboratoire de Mathématiques Appliquées du Havre (LMAH), 25, rue Philippe Lebon, BP 1123, 76063 Le Havre cedex, France\\
E-mail: \texttt{alain.miranville@univ-lehavre.fr}}}
\date{}
\begin{document}

\maketitle

\begin{abstract}
    \noindent In this paper, we study an optimal control problem for a brain tumor growth model that incorporates lactate metabolism, viscoelastic effects, and tissue damage. The PDE system, introduced in \cite{Cavalleri_Colli_Miranville_Rocca_2025}, couples a Fisher–Kolmogorov-type equation for tumor cell density with a reaction-diffusion equation for the lactate, a quasi-static force balance governing the displacement, and a nonlinear differential inclusion for tissue damage. The control variables, representing chemotherapy and a lactate-targeting drug, influence tumor progression and treatment response.  Starting from well-posedness, regularity, and continuous dependence results proven in \cite{Cavalleri_Colli_Miranville_Rocca_2025}, we define a suitable cost functional and prove the existence of optimal controls. Then, we analyze the differentiability of the control-to-state operator and establish a necessary first-order condition for treatment optimality.
            
    \vskip3mm
    
    \noindent {\bf Key words:} tumor growth models, lactate kinetics, mechanical effects, damage, optimal control, adjoint system,  necessary optimality conditions.

    \vskip3mm
    
    \noindent {\bf AMS (MOS) Subject Classification:} 
    35Q93, 
    49J20,  
    49K20, 
    35Q92, 
    92C50. 
		
\end{abstract}

\section{Introduction}

Primary brain tumors, especially gliomas and glioblastomas, are characterized by angiogenesis, invasive growth,
and necrosis. Despite advances in medical research and treatment strategies, the brain tumor survival rate remains low (see, e.g., \cite{Delgado_2016}), and no significant improvement has been observed recently (see, e.g., \cite{Szopa_2017}). 
For this reason, it is crucial to explore new therapeutic approaches and to complement clinical studies with mathematical modeling. The aim of the latter is not only to predict the course of the disease, but also to optimize treatments, by adjusting the combination, timing, and dosage of therapies to achieve the best possible outcome for the patient, meaning maximum reduction of the tumor and minimum drug-related side effects. This is where the present contribution comes into play. Specifically, we are interested in a protocol that combines chemotherapy with a drug targeting lactate, which seems promising from an experimental point of view (cf.~\cite{Sonveaux_etal_08}, \cite{Tang_etal_2021}, \cite{Guyon_etal_2022}). 
From the mathematical perspective, only a few works go in this direction (see, e.g., \cite{Aubert_05}, \cite{Cloutier2009}, \cite{Guillevin_etal_2018}, \cite{Cherfils_etal_2021}, \cite{Cherfils_etal_2022}) and even fewer regard optimal control (cf.~\cite{Cherfils_etal_2024}).
This paper seeks to study an optimal control problem starting from the model recently proposed and analyzed in \cite{Cavalleri_Colli_Miranville_Rocca_2025}, which describes the dynamics of a brain tumor taking into account the lactate metabolism, viscoelastic effects of the tissue, and possible damage caused by surgery.  The introduction of the damage variable in this context was inspired by \cite{cavalleri_2024}, by one of the authors of the present contribution, where damage was first incorporated into a tumor model. Among the huge literature regarding optimal control for tumor growth models (see, e.g., 
\cite{Benosman_2015}, \cite{Colli_etal_2021}, 
\cite{Garcke_Lam_2016}, \cite{Garcke_etal_2018},  \cite{Ebenbeck_Knopf_2019}, \cite{Signori_2020}, 
and the references cited therein) we would like to recall once again \cite{Cherfils_etal_2024}, where the authors deal with a brain tumor-specific model. The resulting PDE system couples three parabolic equations, one for the tumor cell density, and the other two for the intracellular lactate concentration and the capillary lactate concentration, respectively. Another perspective related to the present work is the one from \cite{Garcke_Lam_Signori_Opt_Contr_2021}, in which the authors consider a phase-field model of Cahn--Hilliard type taking into account the presence of a nutrient (such as oxygen or glucose) and mechanical effects. \\

\noindent \textbf{The model.} Let $\Omega$ be a bounded $C^2$ domain in $\RR^n$ with $n=2,3$ and let $\nnu$ be its outward unit normal to $\partial \Omega$. We are interested in the PDE system
\begin{subequations}\label{eq:problem}
    \begin{align}
        & \partial_t\varphi - \Delta \varphi = U(\varphi,\sigma,z,\chi_1), \label{eq:phi}\\
        & \partial_t\sigma - \Delta \sigma +K(\varphi,\sigma,z)= \chi_2 \color{black}S(\varphi,z), \label{eq:lactate}\\
        &  - \diver\left[\A \epsilonut + \B(\varphi,z)\epsilonu  \right] = \ff, \label{eq:displacement}\\
        & \partial_t z - \Delta z + \beta(z) + \pi(z)  \ni  \iota - \Psi(\varphi,\epsilonu), \label{eq:damage}
    \end{align}
\end{subequations}
posed in the parabolic cylinder $Q \coloneqq \Omega \times (0,T)$, where the domain $\Omega$ represents the brain and the fixed time $T>0$ is the medical treatment duration. In equations \eqref{eq:phi} and \eqref{eq:lactate} we introduced the notation
\begin{equation*}
    K(\varphi,\sigma,z)\coloneqq \frac{k_1(\varphi,z)\sigma}{k_2(\varphi,z) + \sigma}, \quad U(\varphi,\sigma,z,\chi_1)\coloneqq(p(\sigma,z)- \chi_1 \color{black} ) \varphi\left(1-\frac{\varphi}{N}\right) - \varphi g(\sigma,z) 
\end{equation*}
for the sake of brevity. As already pointed out, a slightly modified version of \eqref{eq:problem} was proposed in \cite{Cavalleri_Colli_Miranville_Rocca_2025}. We refer the reader to it for further modeling details and recall only the most important aspects. The variable $\varphi$ represents the tumor cell density and takes value in $[0,N]$, where the constant $N$ is the fixed carrying capacity of the tissue. It is ruled by the Fischer--Kolmogorov type equation \eqref{eq:phi}. The parabolic reaction-diffusion equation \eqref{eq:lactate} governs the evolution of $\sigma$, which is the intracellular lactate concentration. The quasi-static balance of forces \eqref{eq:displacement} for a viscoelastic tissue describes the dynamic of $\uu$, which is the small displacement field of each point with respect to a reference undeformed configuration. The local tissue damage $z$ takes value in the interval $[0,1]$, where $z=0$ means there is no damage while $z=1$ means that the tissue is totally damaged. Its evolution is ruled by the parabolic differential inclusion \eqref{eq:damage}. Finally, $\chi_1$ is the concentration of a cytotoxic drug, which decreases the tumor proliferation $p(\sigma,z)$ in the mass source $U$. Similarly, the term $\chi_2$ represents a lactate targeting drug and affects the lactate source term $S$. The system \eqref{eq:problem} is coupled with the boundary conditions
\begin{subequations}\label{eq:boundary_conditions}
    \begin{align}
        & \partial_{\nnu}\varphi = \partial_{\nnu}z = 0,\label{eq:phi,z_neumann}\\
        & \partial_{\nnu}\sigma =\sigma_{\Gamma} -\sigma,\label{eq:sigma_robin}\\
        & \uu = \mathbf{0}, \label{eq:u_dirichlet}
    \end{align}
\end{subequations}
and the initial conditions
\begin{equation}\label{eq:initial_conditions}
    \varphi(0) = \varphi_0,\quad
    \sigma(0) = \sigma_0, \quad
    \uu(0) = \uu_0, \quad
    z(0) = z_0.
\end{equation}

\noindent \textbf{The cost functional.} Introducing the notation $\cchi = (\chi_1,\chi_2)$, we define the following cost functional
\begin{equation} \label{eq:cost_functional}
    \begin{split}
        \mathcal{J}((\varphi,\sigma,\uu,z),\cchi) \coloneqq &\frac{\alpha_1}{2} \norm{\varphi-\varphi_Q}_{L^2(Q)}^2 + \frac{\alpha_2}{2} \norm{\varphi(T)-\varphi_{\Omega}}_{\Hs}^2 + \alpha_3 \into \varphi(T) \dx \\
        & + \frac{\alpha_4}{2} \norm{\sigma-\sigma_Q}_{L^2(Q)}^2 + \frac{\alpha_5}{2} \norm{\sigma(T)-\sigma_{\Omega}}_{\Hs}^2\\
        &+ \frac{\alpha_6}{2} \norm{\sqrt{\gamma(\varphi)}\epsilonu}_{L^2(Q)}^2 + \frac{\alpha_7}{2} \norm{z-z_Q}_{L^2(Q)}^2 \\
        &+  \alpha_8 \into z(T) \dx+ \frac{\alpha_9}{2}\norm{\cchi}_{L^2(Q)}^2. 
    \end{split}
    \end{equation}
The non-negative constants $\alpha_1, \dots, \alpha_9$ are weights that can not vanish all at the same time, while $\varphi_{Q}$, $\varphi_{\Omega}$, $\sigma_{Q}$, $\sigma_{\Omega}$, $z_{Q}$ are target functions. More explicitly,  the term $\varphi_{Q}$ (resp. $\sigma_{Q}$, $z_{Q}$) is a desired evolution for the tumor (resp. the lactate, the damage), while $\varphi_{\Omega}$ (resp. $\sigma_{\Omega}$) is a desired final configuration of the tumor (resp. concentration of the lactate). The third and 
eighth addends measure the size of the tumor and the magnitude of the damage at the end of the treatment. Since the presence of high mechanical stress, especially in certain areas of the brain, can compromise its functionality, we are interested in keeping it low. The sixth term serves this purpose, and $\gamma$ may be, for instance, the indicator function of a subdomain of $\Omega$ where the stress is intended to remain low. Finally, the last addend is a $L^2$ regularization for the controls.
We are interested in studying the minimizers of the cost functional \eqref{eq:cost_functional} subject to the PDE system \eqref{eq:problem}--\eqref{eq:initial_conditions} and constrained to a suitable admissible control set. We define it as $ \Uad = \Uad^1 \times \Uad^2$ with 
\begin{equation*}
    \begin{split}
        &\Uad^1 \coloneqq \{ \chi_1 \in L^2(\Vs) \cap L^{\infty}(Q) \, : \, \norm{\chi_1}_{L^2(\Vs)} \leq C_{\text{ad}}, \,\, \underline{\chi_1} \leq \chi_1 \leq \overline{\chi_1} \text{ a.e.}\}, \\
        &\Uad^2 \coloneqq \{ \chi_2 \in L^{\infty}(Q) \, : \, \underline{\chi_2} \leq \chi_2 \leq \overline{\chi_2} \text{ a.e.}\}, 
    \end{split}
\end{equation*}
where $C_{\text{ad}}>0$ is a fixed constant and $\underline{\chi_1}$, $\overline{\chi_1}$, $\underline{\chi_2}$, $\overline{\chi_2} \in L^{\infty}(Q)$ are given threshold functions such that $\Uad$ is nonempty. The admissible control set $\Uad$ is a subset of
\begin{equation*}
    \U= \U^1 \times \U^2 \coloneqq \left[ L^2(\Vs) \cap L^{\infty}(Q) \right] \times L^{\infty}(Q)
\end{equation*}
equipped with its natural norm that we denote with $\norm{\cdot}_{\U}$.

\begin{remark}
    Notice that $\Uad$ is a nonempty, closed, and convex subset of $\U$. Moreover, there exists a positive constant $R$ such that
    \begin{equation*}
        \Uad \subseteq \Ur \coloneqq \{ \cchi \in \U \, : \,  \norm{\cchi}_{\U} < R\}.
    \end{equation*}
\end{remark}

\noindent \textbf{Aim and plan of the paper.} The goal of this paper is to prove that the optimal control problem has at least one minimizer and to derive first-order necessary conditions for optimality. It is organized as follows. In \Cref{section:state_system}, after introducing some notation and preliminary results, we enlist the hypotheses we are going to use throughout the paper. Then, we recall some known results about the state system \eqref{eq:problem}--\eqref{eq:initial_conditions} from the previous work \cite{Cavalleri_Colli_Miranville_Rocca_2025} and we prove a strict separation property for the damage.  In \Cref{section:opt_control_problem} we properly state the optimal control problem and prove that it admits at least one minimizer. 
In \Cref{section:linearized_system} we analyze the linearized system, and in \Cref{section:diff_solution_op} we use it to prove the differentiability of the control-to-state operator. Finally, in \Cref{section:adj_system}, we study the adjoint system and derive the first-order necessary conditions for optimality.

\section{The state system}\label{section:state_system}

\subsection{Notation and preliminaries}
\textbf{Notation.} In what follows, for any real Banach space $X$ with dual space $X'$, we indicate its norm as $\norm{\cdot}_{X}$ and the dual pairing between $X'$ and $X$ as $\duality{\cdot}{\cdot}_{X}$. We denote the Lebesgue and Sobolev spaces over $\Omega$ as $L^p \coloneqq L^p(\Omega)$, $W^{k,p} \coloneqq W^{k,p}(\Omega)$ and $H^k \coloneqq W^{k,2}(\Omega)$, while for the Lebesgue spaces over $\Gamma$ we use $L^p_{\Gamma} \coloneqq L^p(\Gamma)$. 
For convenience, we set 
\begin{equation*}
    \Hs \coloneqq L^2
\end{equation*}
and we identify $\Hs$ with its dual space $\Hs'$. We introduce 
\begin{equation*}
    \Vs \coloneqq H^1, \qquad \Vs_0\coloneqq H^1_0,
\end{equation*}
where $H^1_0$ represents the set of $H^1$ functions with zero trace at the boundary. Additionally, we define
\begin{equation*}
    W \coloneqq \{v \in H^2 \, | \, \partial_{\nnu} v = 0 \text{ on } \Gamma\}, \qquad  \Wzs \coloneq H^2 \cap \Vs_0.
\end{equation*}
In both cases, the natural norm 
induced by $H^2$ is denoted by $\norm{\cdot}_{\Ws}$.
To simplify the notation, we do not always distinguish between scalar, vector, and matrix-valued spaces. For the sake of brevity, the norm of the Bochner space $W^{k,p}(0,T;X)$ is indicated as $\norm{\cdot}_{W^{k,p}(X)}$, omitting the time interval $(0,T)$. With the notation $C^0([0,T];X)$ we mean the space of continuous $X$-valued functions.
Finally, as is customary, $C$ represents a generic constant depending only on the problem's data and whose value might change from line to line or even within the same line. If we want to highlight a dependency on a certain parameter, we put it as a subscript (e.g., $C_{\tau}$ indicates a constant that depends on $\tau$, $C_0$ a constant that depends on the initial data, etc.). \\

\noindent \textbf{Useful inequalities.} We will make use of classical inequalities, such as H\"older, Young, Gronwall, Poincaré, and Poincaré--Wirtinger.
We will employ the following special cases of  Gagliardo--Nirenberg inequality (see \cite{Nirenberg_59}).
\begin{lemma}\label{lemma:special_case_gagliardo_nirenberg}
    Let $\Omega$ be a bounded Lipschitz domain in $\RR^d$. Then, it exists a constant $C$ such as, for every $v \in \Vs$, it holds
    \begin{align}
        &\norm{v}_{L^4} \leq  C \norm{v}^{\frac{1}{2}}_{\Hs}\norm{v}^{\frac{1}{2}}_{\Vs}  &&\text{if} \quad n=2,\\
        &\norm{v}_{L^3} \leq  C \norm{v}^{\frac{1}{2}}_{\Hs}\norm{v}^{\frac{1}{2}}_{\Vs}, \quad
        \norm{v}_{L^4} \leq  C \norm{v}^{\frac{1}{4}}_{\Hs}\norm{v}^{\frac{3}{4}}_{\Vs} &&\text{if} \quad  n=3.
    \end{align}
\end{lemma}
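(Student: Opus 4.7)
The plan is to deduce these three inequalities as direct specializations of the classical Gagliardo--Nirenberg interpolation inequality (in the form proved in \cite{Nirenberg_59}), which on a bounded Lipschitz domain states that, for $v \in W^{1,p}(\Omega) \cap L^q(\Omega)$,
\begin{equation*}
\norm{v}_{L^r} \leq C \bigl( \norm{\nabla v}_{L^p}^{\theta} \norm{v}_{L^q}^{1-\theta} + \norm{v}_{L^q}\bigr),
\end{equation*}
whenever the scaling relation $\frac{1}{r} = \theta\bigl(\frac{1}{p} - \frac{1}{n}\bigr) + (1-\theta)\frac{1}{q}$ is satisfied with $\theta \in [0,1]$ (subject to the usual admissibility). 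All three inequalities we want to prove correspond to the choice $p = q = 2$, in which case the right-hand side is controlled by $\norm{v}_{\Hs}^{1-\theta}\norm{v}_{\Vs}^{\theta}$ up to a multiplicative constant, since $\norm{\nabla v}_{L^2} \leq \norm{v}_{\Vs}$ and $\norm{v}_{L^2} \leq \norm{v}_{\Hs}^{1-\theta}\norm{v}_{\Vs}^{\theta}$ by a trivial interpolation.

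The only thing left to check is thus the algebra of the exponents. For $n=2$, $r=4$, the identity $\frac{1}{4} = (1-\theta)\frac{1}{2}$ gives $\theta = \frac{1}{2}$, which yields the first inequality. For $n=3$, $r=3$, solving $\frac{1}{3} = \theta\bigl(\frac{1}{2} - \frac{1}{3}\bigr) + (1-\theta)\frac{1}{2} = \frac{\theta}{6} + \frac{1-\theta}{2}$ gives $\theta = \frac{1}{2}$, as required. For $n=3$, $r=4$, solving $\frac{1}{4} = \frac{\theta}{6} + \frac{1-\theta}{2}$ gives $\theta = \frac{3}{4}$, matching the exponents in the statement.

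There is no real obstacle here: the lemma is a direct application of a classical theorem, and the proof consists essentially in recording the values of the interpolation parameter $\theta$ in each case. The only mildly delicate point is justifying that the full $\Vs$-norm (rather than just the seminorm $\norm{\nabla v}_{L^2}$) appears on the right, which is done by absorbing the lower-order term $C\norm{v}_{\Hs}$ via the elementary bound $\norm{v}_{\Hs} \leq \norm{v}_{\Hs}^{1-\theta}\norm{v}_{\Vs}^{\theta}$ valid for $v \in \Vs$ and $\theta \in [0,1]$.
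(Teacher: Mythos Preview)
Your proposal is correct and follows exactly the approach the paper indicates: the paper does not give a proof of this lemma but simply cites \cite{Nirenberg_59} and calls the inequalities ``special cases of Gagliardo--Nirenberg,'' which is precisely what you verify by checking the interpolation parameter $\theta$ in each case and absorbing the lower-order term into the full $\Vs$-norm.
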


\noindent From this result, employing the Young inequality, it is easy to obtain an inequality that we will extensively use throughout the paper and, thus, is worth mentioning. For every $\delta > 0$ there exists a positive constant $C_{\delta}$ such that, for $p=3$ and $p=4$, the following holds:
\begin{equation}
    \norm{v}_{L^p}^2 \leq \delta \norm{v}_{\Hs}^2 + C_{\delta}\norm{\nabla v}_{\Hs}^2, 
\end{equation}
for every $v \in \Vs$, both in dimension $n=2$ and $n=3$.

\subsection{Hypotheses}
In this section, we enlist the hypotheses under which we will work throughout the whole paper. Most of them come from \cite{Cavalleri_Colli_Miranville_Rocca_2025} and are necessary to establish solutions' well-posedness, additional regularity, and continuous dependence. However, we need stronger assumptions to prove the results related to the optimal control problem. Specifically, the higher regularity requested for the nonlinearities is because we need to handle the corresponding terms in the linearized and adjoint systems.

\begin{enumerate}[(\rm{H\arabic*})]
\item \label{hyp:eq_phi}
We suppose that
\begin{align}
    &p, \, g \in C^{0,1}(\RR^2) \cap C^2(\RR^2),\\
    &0 \leq p \leq p^*, \quad 0 \leq g \leq g^*,
\end{align}
where $p^*$, $g^*$ denote positive constants, and that
\begin{equation}
    N \text{ is a positive constant.}
\end{equation}

\item \label{hyp:eq_lactate}
We assume that
\begin{align}
    &k_1, k_2, S \in C^{0,1}(\RR^2) \cap C^2(\RR^2) \text{, and }\\
    &0 \leq k_1 \leq k_1^*, \quad 0 < {k_2}_* \leq k_2 \leq k_2^*, \quad  0 \leq S \leq S^*,
\end{align}
where $k_1^*$, ${k_2}_*$, $k_2^*$, and $S^*$ are given constants.

\item \label{hyp:eq_displacement}
The  fourth-order tensors $\A = (a_{ijkh})$, $\B = (b_{ijkh}) : \RR^2 \to \RR^{n \times n \times n \times n}$ satisfy:
\begin{align}
     & \A \text{ is constant, symmetric, and strictly positive definite, and}\\
     & \B \in C^{0,1}(\RR)\cap C^2(\RR^2)\text{ is bounded, symmetric, and positive definite}.
\end{align}
Moreover, we assume
\begin{equation*}
\ff \in L^{\infty}(H).
\end{equation*}
\end{enumerate}

\noindent  In the following, being able to handle the maximal monotone operator $\beta$ and its higher derivatives will be essential. To this end, the key point is proving a strict separation property for the damage variable $z$. This leads us to adopt a more restrictive form of the convex potential $\hat{\beta}$, moving beyond the quite general hypotheses employed in the previous analysis \cite{Cavalleri_Colli_Miranville_Rocca_2025}.

\begin{enumerate}[(\rm{H\arabic*}),resume]
\item \label{hyp:beta}
We consider a $\widehat{\beta}: \RR \to [0,+\infty]$ such that 
\begin{align}
     &\widehat{\beta} \text{ is lower semicontinuous, convex, and}\\
     &\widehat{\beta} \in C^3(0,1).
\end{align}
Its derivative  $\beta \coloneqq \widehat{\beta}'$ satisfies the growth conditions:
\begin{equation}
    \lim_{r \to 0^+} \beta(r) = - \infty, \qquad \lim_{r \to 1^-} \beta(r) = + \infty.
\end{equation}

\item \label{hyp:pi}
We consider a function $\widehat{\pi} \in C^1(\RR)\cap C^3(0,1)$ and we denote by $\pi \coloneqq \widehat{\pi}'$ its derivative, requiring that
\begin{align}
    &\widehat{\pi} \text{  is concave,}\\
    &\pi \text{ is Lipschitz continuous.}
\end{align}
\end{enumerate}

\begin{ex}
    The prototypical example to keep in mind is the logarithmic potential
\begin{equation*}
    \hat{\beta}(r) + \hat{\pi}(r) = C_1 \left[r\ln{r} + (1-r)\ln{(1-r)}\right] - C_2r^2
\end{equation*}
for some given and positive constants $C_1,C_2$, which clearly fulfills the hypotheses \ref{hyp:beta} and \ref{hyp:pi}.
\end{ex}

\begin{enumerate}[(\rm{H\arabic*}),resume]
\item \label{hyp:iota}
We suppose that
\begin{equation}
    \iota \in L^{\infty}(Q) \cap H^1(0,T; \Hs).
\end{equation}

\item \label{hyp:psi}
We assume that $\Psi \in W^{2,\infty}(\Omega \times \RR \times \RR^{n \times n})$ and that
\begin{equation}\label{eq:psi_lipschitz}
    \begin{split}
        &\Psi(x, \cdot, \cdot) :  \RR \times \RR^{n \times n} \to \RR \text{ is Lipschitz continuous, i.e.,}\\
    & \exists C_{\Psi}>0 \text{ s.t. } |\Psi(x,\varphi_1,\epsilon_1)-\Psi(x,\varphi_2,\epsilon_2)| \leq C_{\Psi} \left(|\varphi_1-\varphi_2| + |\epsilon_1 - \epsilon_2|\right) 
    \end{split}
\end{equation}
for a.e. $x \in \Omega$, for all $\epsilon_1, \epsilon_2 \in \RR^{n \times n}$, $\varphi_1, \varphi_2 \in \RR$.
\end{enumerate}

\noindent In the following, for the sake of brevity, we will omit in the notation the dependence of $\Psi$ on the point $x$, using $\Psi(\varphi,\epsilon)$ instead of $\Psi(x,\varphi,\epsilon)$.

\begin{remark}
    We require the boundedness of $\Psi(\varphi,\epsilonu)$ to establish a separation property for the damage variable $z$. It is worth noting that this assumption is not as restrictive as it might initially appear. Indeed, we will prove the boundedness of $\varphi$, and since we are working within the framework of linear elasticity, $\epsilonu$ is expected to remain small. Consequently, under hypothesis \eqref{eq:psi_lipschitz}, the boundedness of $\Psi(\varphi,\epsilonu)$ would follow asking that $\Psi( 0, \mathbf{0}) \in L^{\infty}(\Omega)$. However, as we are unable to prove the boundedness of $\epsilonu$ mathematically, it becomes necessary to explicitly impose $\Psi$ the boundedness of as an additional condition. The boundedness of the higher derivatives is required to handle the linearized coefficients in the linearized and adjoint systems.
\end{remark}

\begin{enumerate}[(\rm{H\arabic*}),resume]
\item \label{hyp:boundary_conditions}
Regarding the boundary conditions, we suppose that
\begin{align}
    \sigma_{\Gamma} \in L^{2}(0,T; L^2_{\Gamma}),\quad 0 \leq \sigma_{\Gamma} \leq M_0,
\end{align}
where $M_0$ is a fixed positive constant.

\item \label{hyp:initial_conditions}
Regarding the initial conditions, we assume that
\begin{align}
    &\varphi_0 \in \Ws, \quad 0 \leq \varphi_0 \leq N,\\
    &\sigma_0 \in \Hs, \quad 0 \leq \sigma_0 \leq M_0, \label{eq:hp_sigma0}\\
    &\uu_0 \in \Wzs,\\
    &z_0 \in \Ws, \quad  0 < \text{ess inf}(z_0), \quad  \text{ess sup}(z_0) < 1.
\end{align}
\end{enumerate}

\begin{remark}
    Since the domain of such $\hat{\beta}$ is the physically meaningful interval $[0,1]$, the initial datum $z_0$ must be chosen such that its values lie within this interval. Recalling again that our goal is to prove a separation property for the damage, we request that $z_0$ stays bounded away from $0$ and $1$.
\end{remark}

\noindent Finally, regarding the cost functional $\mathcal{J}$, we make the following assumptions. 

\begin{enumerate}[(\rm{H\arabic*}), resume]
    \item \label{hyp:J_coeff} The coefficients $\alpha_1, \dots, \alpha_9$ are nonnegative constants that can not vanish all at the same time.
    \item \label{hyp:target_functions}The target functions satisfy
    \begin{equation}
        \varphi_Q, \sigma_Q, z_Q \in L^2(Q),\qquad \varphi_{\Omega}, \sigma_{\Omega} \in \Hs.
    \end{equation}
    \item \label{hyp:n} The coefficient $\gamma: \Omega \times \RR \to [0,+\infty)$ is a Carathéodory function such that 
    \begin{equation}
        \gamma(x,\cdot) \in C^1(\RR) 
    \end{equation}
    for a.e. $x \in \Omega$. Moreover, it exists a constant $C_{\gamma} >0$ such that
    \begin{equation}
        |\gamma(x,\varphi)| + |\partial_{\varphi} \gamma (x,\varphi)| \leq C_{\gamma}
    \end{equation}
    for a.e. $x \in \Omega$ and for all $\varphi \in [0,N]$. 
\end{enumerate}
Even if $\gamma$ also depends on the point $x \in \Omega$, in the following, we will employ the shorter notation $\gamma(\varphi)$ instead of $\gamma(x,\varphi)$. For the same reason, the partial derivative of $\gamma$ with respect to the variable $\varphi$ will be denoted by $\gamma'(\varphi)$. 

\subsection{Previous results}
\begin{defn}[State system] \label{defn:weak_solutio_state_problem}
    We say that the quadruplet $(\varphi,\, \sigma,\, \uu,\, z)$ is a weak solution to the state problem \eqref{eq:problem}--\eqref{eq:initial_conditions} if  
    \begin{gather*}
            \varphi \in   H^1(0,T;\Hs) \cap L^{\infty}(0,T; \Vs) \cap L^{2}(0,T;\Ws), \quad 0 \leq \varphi \leq N,\\
            \sigma \in H^1(0,T;\Vs') \cap L^{\infty}(0,T;\Hs) \cap L^2(0,T;\Vs), \quad 0 \leq \sigma \leq M,\\
            \uu \in W^{1,\infty}(0,T;\Vs_0),\\
            z \in   H^1(0,T;\Hs) \cap L^{\infty}(0,T; \Vs) \cap L^{2}(0,T;\Ws), \quad 0 < z < 1
    \end{gather*}
   where $M=M(M_0, S^*)$, with 
   \begin{gather*}
       \varphi(0)=\varphi_0, \quad \sigma(0)=\sigma_0, \quad  \uu(0)=\uu_0,  \quad z(0)=z_0
   \end{gather*}
   a.e. in $\Omega$, such that
    \begin{subequations}\label{eq:state_problem_eq_with_spaces}
        \begin{align}
        & \into \partial_t\varphi \eta +  \nabla \varphi \cdot \nabla \eta \dx = \into \bigg[(p(\sigma,z)-\chi_1)\varphi\left(1-\frac{\varphi}{N}\right) - \varphi g(\sigma,z)\bigg] \eta \dx, \label{eq:phi_with_spaces}\\
        &\begin{aligned}
            &\duality{\partial_t\sigma}{\eta }_{\Vs} + \into \nabla \sigma \cdot \nabla \eta \dx + \into \frac{k_1(\varphi,z)\sigma}{k_2(\varphi,z) + \sigma}\eta\dx + \intg (\sigma - \sigmag) \eta \dA\\
            & \hspace{49.5ex}= \into \chi_2 S(\varphi,z) \eta \dx,
        \end{aligned}\label{eq:sigma_with_spaces}\\
        &  \into  \left[\A \epsilonut + \B(\varphi,z)\epsilonu  \right] : \varepsilon(\ttheta) \dx = \into \ff \cdot \ttheta \dx,\label{eq:u_with_spaces}\\
        & \into \left[\partial_t z \eta + \nabla z \cdot \nabla \eta  + \beta(z) \eta  + \pi(z) \eta \right] \dx = \into \left[ \iota -  \Psi(\varphi,\epsilonu) \right] \eta \dx,\label{eq:z_with_spaces}
    \end{align}
\end{subequations}
a.e. in $(0,T)$, for every $\eta \in \Vs$ and $\ttheta \in \Vs_0$.
\end{defn}

\noindent Since $\cchi$ is fixed and regular enough, well-posedness and continuous dependence can be proved as in \cite{Cavalleri_Colli_Miranville_Rocca_2025}. Note that $z$ has values in $(0,1)$ because $z$ belongs to the domain of $\beta$. The precise statement is contained in the theorem below.

\begin{thm}\label{thm:wellposedness}
    The following statements hold.
    \begin{enumerate}[(i)]
        \item For every $\chi \in \U$, the state system admits a unique solution in the sense of \Cref{defn:weak_solutio_state_problem}, with the following additional regularity
        \begin{gather*}
        \varphi \in H^1(0,T;\Vs) \cap L^{\infty}(0,T;\Ws) \cap L^2(0,T;H^3),\\
         \uu \in W^{1,\infty}(0,T;\Wzs),\\
         z \in H^1(0,T;\Vs) \cap L^{\infty}(0,T;\Ws).
    \end{gather*}
        \item For every $\chi \in \Ur$ and its associated solution to the state system $(\varphi,\sigma,\uu,z)$, the following estimate is satisfied
        \begin{equation}
            \begin{split}
                \norm{\varphi}_{H^1(\Vs) \cap L^{\infty}(\Ws) \cap L^{2}(H^3)} &+ \norm{\sigma}_{ H^1(\Vs') \cap L^{\infty}(\Hs) \cap L^2(\Vs)}\\
            &+ \norm{\uu}_{W^{1,\infty}(\Wzs)} + \norm{z}_{H^1(\Vs) \cap L^{\infty}(\Ws)} \leq C_R
            \end{split}
        \end{equation}
        for a positive constant $C_R$ that depends on $R$, the initial data, and the assigned functions, but not on $\cchi$.  \label{thm:wellposedness_energy_estimate}
        \item  For every couple of controls $\cchi, \overline{\cchi} \in \Ur$ with associated solutions to the state system given respectively by $(\varphi,\sigma,\uu,z)$ and $(\overline{\varphi},\overline{\sigma},\overline{\uu},\overline{z})$, the following continuous dependence inequality is satisfied
        \begin{equation*}
        \begin{split}
            \norm{\varphi-\overline{\varphi}}_{L^{\infty}(\Hs)\cap L^2(\Vs)} + \norm{\sigma-\overline{\sigma}}_{L^{\infty}(\Hs)\cap L^2(\Vs)}+ \norm{\uu-\overline{\uu}}&_ {H^1(\Vs_0)}\\
            +  \norm{z-\overline{z}}_{L^{\infty}(\Hs)\cap L^2(\Vs)} &\leq C_R\norm{\cchi - \overline{\cchi}}_{L^2(Q)}
        \end{split}
    \end{equation*}
    for a certain positive constant $C_R$ that depends on $R$, the initial data, and the assigned functions, but not on $\cchi, \overline{\cchi}$. \label{thm:wellposedness_continuous_dependence}
    \end{enumerate}
\end{thm}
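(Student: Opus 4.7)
The claim essentially collects well-posedness, regularity, and continuous dependence; parts (i)--(iii) under the \emph{weaker} hypotheses of \cite{Cavalleri_Colli_Miranville_Rocca_2025} are already proved there, and since (H1)--(H8) strengthen those assumptions the bulk of the argument transfers verbatim. The genuinely new contributions are: (a) the strict separation property $0 < z < 1$, which exploits the singular nature of $\beta$ in (H4) and was not available before; (b) the improved regularity of $z$, $\varphi$, $\uu$ stated in (i), which is a bootstrap once the separation is in place; (c) tracking that all constants depend only on $R$, not on the particular $\cchi$. The main obstacle is (a), so I would concentrate there.

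\textbf{Strict separation.} The plan is to build constant sub- and super-solutions. Set $F \coloneqq \iota - \Psi(\varphi,\epsilonu) - \pi(z)$: this is in $L^\infty(Q)$ since $\iota \in L^\infty(Q)$ by (H5), $\Psi$ is bounded by (H6), and $\pi$ is Lipschitz with $z \in [0,1]$. Since $\beta(r) \to -\infty$ as $r \to 0^+$ by (H4), I can pick $\delta \in (0, \operatorname{ess\,inf} z_0)$ small enough that $\beta(\delta) \le - \norm{F}_{L^\infty(Q)}$. Testing \eqref{eq:z_with_spaces} with $\eta = -(\delta - z)^+ \in \Vs$, I obtain
\begin{equation*}
    \frac{1}{2}\frac{d}{dt}\norm{(\delta - z)^+}_{\Hs}^2 + \norm{\nabla (\delta - z)^+}_{\Hs}^2 + \into \bigl(\beta(\delta) - \beta(z)\bigr)(\delta-z)^+ \dx = \into F\,(z - \delta)^+ \dx,
\end{equation*}
where I have used $\partial_\nu z = 0$, the identity $\nabla(\delta - z)^+ = -\chi_{\{z<\delta\}}\nabla z$, and added/subtracted $\beta(\delta)(\delta-z)^+$. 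By monotonicity of $\beta$ the third term on the left is nonnegative, and by the choice of $\delta$ the right-hand side is bounded by $\beta(\delta)\int (\delta-z)^+ \le 0$ after absorbing. Since $(\delta - z_0)^+ \equiv 0$ by the choice of $\delta$, Gr\"onwall forces $(\delta - z)^+ \equiv 0$, i.e., $z \ge \delta$ a.e. The upper bound $z \le 1-\delta'$ follows symmetrically from $\beta(r) \to +\infty$ as $r \to 1^-$.

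\textbf{Improved regularity and the rest.} Once $z$ is separated from $\{0,1\}$, (H4) implies that $\beta$, $\beta'$, $\beta''$ are \emph{uniformly bounded} on the range of $z$, so the singularity is effectively removed. Testing \eqref{eq:z_with_spaces} against $\partial_t z$ and then against $-\Delta z$, and using the now-bounded $\beta(z)$ and the $L^\infty$ bound on $F$, yields $z \in H^1(\Vs) \cap L^\infty(\Ws)$ by elliptic regularity on the $C^2$ domain. For $\varphi$, differentiating \eqref{eq:phi_with_spaces} formally in time and testing with $\partial_t \varphi$, together with the $C^2$ bounds on $p,g$ from (H1), upgrades the regularity to $H^1(\Vs) \cap L^\infty(\Ws) \cap L^2(H^3)$; for $\uu$, differentiating \eqref{eq:u_with_spaces} in time, testing with $\partial_t \uu$ and using coercivity of $\A$ (H3) together with $H^2$-elliptic regularity on $\Vs_0$ gives $\uu \in W^{1,\infty}(\Wzs)$. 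For (ii), I revisit the a priori bounds of \cite{Cavalleri_Colli_Miranville_Rocca_2025} and track constants: each use of Young's inequality involving $\cchi$ produces a factor $\norm{\cchi}_{\U} < R$, so all constants absorb into $C_R$. For (iii), writing the system for the differences, testing by $\varphi - \overline\varphi$, an antiderivative of $\sigma - \overline\sigma$, $\partial_t(\uu - \overline\uu)$, and $z - \overline z$, using Lipschitz continuity of $p, g, k_1, k_2, S, \B, \pi, \Psi$ (from (H1)--(H3), (H5), (H6)) and monotonicity of $\beta$ to drop that term, followed by a coupled Gr\"onwall argument, yields the stated continuous dependence — the coupling through $\B(\varphi,z)$ and $\Psi(\varphi,\epsilonu)$ being the only delicate point, already handled in the reference.
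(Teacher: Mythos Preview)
Your mathematics is largely sound, but you have misread the structure of the paper: Theorem~\ref{thm:wellposedness} is \emph{not} proved here at all---the paper simply states that ``well-posedness and continuous dependence can be proved as in \cite{Cavalleri_Colli_Miranville_Rocca_2025}'' and gives no further argument. All three parts, including the additional regularity in (i), are already established in that reference under \emph{weaker} hypotheses on $\widehat\beta$ (general maximal monotone, not singular $C^3$), so nothing new is required once (H1)--(H9) are seen to strengthen those assumptions.

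In particular, you have conflated two distinct things. The condition $0<z<1$ appearing in Definition~\ref{defn:weak_solutio_state_problem} is \emph{not} a strict separation property: it simply records that $z$ takes values in the effective domain of $\beta$, which is $(0,1)$ by (H4), and this is automatic for any weak solution since $\beta(z)\in L^2(Q)$ must be finite a.e. The genuine strict separation---the existence of $0<r_*\le r^*<1$ with $r_*\le z\le r^*$ uniformly---is Proposition~\ref{prop:separation_property}, a \emph{separate} result proved \emph{after} Theorem~\ref{thm:wellposedness}. Your sub/supersolution argument is essentially the paper's proof of that proposition (with cosmetic differences), but it does not belong to the proof of the present theorem. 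Likewise, the regularity $z\in H^1(\Vs)\cap L^\infty(\Ws)$ in (i) does \emph{not} rely on separation; it comes from the general a~priori estimates in the cited work, which handle $\beta$ as a maximal monotone graph. Your bootstrap via bounded $\beta,\beta',\beta''$ would work, but it is circular in the paper's logic (Proposition~\ref{prop:separation_property} uses the regularity from Theorem~\ref{thm:wellposedness} as input) and unnecessary.
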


\begin{remark} \label{remark:add_continuous_estimate}
    From \ref{thm:wellposedness_energy_estimate} and \ref{thm:wellposedness_continuous_dependence} in \Cref{thm:wellposedness}, employing standard interpolation results, we are able to prove the following continuity estimates which we will need later on. Precisely, for every $\cchi,\overline{\cchi}$ in $\Ur$, we have:
    \begin{equation}\label{eq:add_continuous_estimate_inf4}
        \norm{\varphi-\overline{\varphi}}_{L^{\infty}(L^4)} + \norm{\epsilonu - \varepsilon(\overline{\uu})}_{L^{\infty}(L^4)} + \norm{z-\overline{z}}_{L^{\infty}(L^4)} \leq C_R \norm{\cchi - \overline{\cchi}}_{L^2(Q)}^{\frac{1}{4}}.
    \end{equation} 
    Applying Gagliardo--Nirenberg interpolation inequality from \Cref{lemma:special_case_gagliardo_nirenberg}, estimates \ref{thm:wellposedness_energy_estimate} and \ref{thm:wellposedness_continuous_dependence} from the Theorem above, we have:
    \begin{equation*}
        \begin{split}
             \norm{&\epsilonu - \varepsilon(\overline{\uu})}_{L^{\infty}(L^4)} \leq C  \norm{\epsilonu - \varepsilon(\overline{\uu})}_{L^{\infty}(\Hs)}^{\frac{1}{4}}\norm{\epsilonu - \varepsilon(\overline{\uu})}_{L^{\infty}(\Vs)}^{\frac{3}{4}}\\
             &\leq  C\norm{\uu - \overline{\uu}}_{L^{\infty}(\Vs_0)}^{\frac{1}{4}}\norm{\uu - \overline{\uu}}_{L^{\infty}(\Wzs)}^{\frac{3}{4}} \leq C_R \norm{\uu-\overline{\uu}}_{H^1(\Vs_0)}^{\frac{1}{4}} \leq C_R \norm{\cchi - \overline{\cchi}}_{L^2(Q)}^{\frac{1}{4}}.
        \end{split}
    \end{equation*}
    The same can be performed for $\varphi$ and $z$. Similarly, we have
    \begin{equation}\label{eq:add_continuous_estimate_4inf}
        \norm{\varphi-\overline{\varphi}}_{L^4(L^{\infty})}  + \norm{z-\overline{z}}_{L^4(L^{\infty})} \leq C_R \norm{\cchi - \overline{\cchi}}_{L^2(Q)}^{\frac{1}{4}}.
    \end{equation}
    In fact, applying Gagliardo--Nirenberg inequality and the embedding $W^{1,4} \hookrightarrow L^{\infty}(\Omega)$, we obtain
    \begin{equation*}
        \norm{\varphi - \overline{\varphi}}_{L^{\infty}(\Omega)} \leq \norm{\varphi - \overline{\varphi}}_{W^{1,4}}\leq C \norm{\varphi - \overline{\varphi}}_{\Vs}^{\frac{1}{4}} \norm{\varphi - \overline{\varphi}}_{\Ws}^{\frac{3}{4}} \leq C_R \norm{\varphi - \overline{\varphi}}_{\Vs}^{\frac{1}{4}}, 
    \end{equation*}
    where the last inequality follows from the energy estimate in \ref{thm:wellposedness_energy_estimate}.
    Elevating to the power $4$th and integrating in time over $(0,T)$ leads to 
    \begin{equation*}
         \begin{split}
             \norm{\varphi - \overline{\varphi}}_{L^4(L^{\infty})}^4 &= \intT \norm{\varphi - \overline{\varphi}}_{L^{\infty}(\Omega)}^4 \dt \leq C_R \intT \norm{\varphi - \overline{\varphi}}_{\Vs} \dt\\
             &= C_R \norm{\varphi - \overline{\varphi}}_{L^1(\Vs)}\leq C_R \norm{\varphi - \overline{\varphi}}_{L^2(\Vs)} \leq C_R \norm{\cchi-\overline{\cchi}}_{L^2(Q)}
         \end{split}
    \end{equation*}
    thanks to the continuous estimate \ref{thm:wellposedness_continuous_dependence}. The same holds for the damage. Finally, the following inequality is satisfied
    \begin{equation}\label{eq:add_continuous_estimate_43}
        \norm{\varphi-\overline{\varphi}}_{L^4(L^3)}  + \norm{z-\overline{z}}_{L^4(L^3)} \leq C_R \norm{\cchi - \overline{\cchi}}_{L^2(Q)}.
    \end{equation}
    Proceeding as before, 
    \begin{equation*}
        \begin{split}
             \norm{\varphi-\overline{\varphi}}_{L^4(L^3)}^4 &= \intT \norm{\varphi-\overline{\varphi}}_{L^3}^4 \dt \leq C \intT \norm{\varphi-\overline{\varphi}}_{\Hs}^2  \norm{\varphi-\overline{\varphi}}_{\Vs}^2 \dt\\
             &\leq C \norm{\varphi-\overline{\varphi}}_{L^{\infty}(\Hs)}^2 \intT  \norm{\varphi-\overline{\varphi}}_{\Vs}^2 \dt\\
             & = C  \norm{\varphi-\overline{\varphi}}_{L^{\infty}(\Hs)}^2  \norm{\varphi-\overline{\varphi}}_{L^2(\Vs)}^2 \leq C_R \norm{\cchi - \overline{\cchi}}_{L^2(Q)}^4,
        \end{split}
    \end{equation*}
    and the same for $z$.
\end{remark}

\subsection{A strict separation property for the damage}
\noindent Differently from \cite{Cavalleri_Colli_Miranville_Rocca_2025}, we impose more restrictive assumptions on the potential $\hat{\beta}$, as well as boundedness for $\iota$ and $\Psi$, which enable us to establish a separation property for the damage $z$.
\begin{prop}\label{prop:separation_property}
    There exist $0 < r_* \leq r^* < 1$ which may depend on the data of the problem and on $R$ such that, for every $\cchi \in \Ur$ with the associated solution to the state problem $(\varphi,\sigma,\uu,z)$,
    \begin{equation}
        r_* \leq z \leq r^*
    \end{equation}
    a.e. in $Q$.
\end{prop}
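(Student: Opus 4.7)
My strategy is a Stampacchia-type truncation argument that exploits the singular blow-up of $\beta$ at the endpoints of $(0,1)$ to push $z$ uniformly away from the boundary of its domain.

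First I would secure a uniform $L^\infty(Q)$ bound, call it $M$, on the composite source $\iota - \Psi(\varphi,\epsilonu) - \pi(z)$: hypothesis \ref{hyp:iota} gives $\iota \in L^\infty(Q)$, \ref{hyp:psi} ensures $\Psi$ is globally bounded (both independently of the control), and the fact that $0<z<1$ a.e.\ (part of \Cref{defn:weak_solutio_state_problem} and \Cref{thm:wellposedness}) combined with the continuity of $\pi$ keeps $\pi(z)$ bounded. Using the blow-up conditions in \ref{hyp:beta}, I can then pick $r_*\in (0,\,\mathrm{ess\,inf}(z_0)]$ so small and $r^*\in [\mathrm{ess\,sup}(z_0),\,1)$ so close to $1$ that $\beta(r)\leq -M$ on $(0,r_*]$ and $\beta(r)\geq M$ on $[r^*,1)$.

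The lower bound $z\geq r_*$ would then follow by testing the weak formulation \eqref{eq:z_with_spaces} with $\eta = -(r_*-z)^+ \in L^2(0,T;\Vs)$. Using $\nabla z\cdot\nabla\eta = \chi_{\{z<r_*\}}|\nabla z|^2$ and the chain rule $\into \partial_t z\,\eta \dx = \tfrac{1}{2}\tfrac{d}{dt}\norm{(r_*-z)^+}_{\Hs}^2$, the equation rewrites as
\begin{equation*}
    \frac{1}{2}\frac{d}{dt}\norm{(r_*-z)^+}_{\Hs}^2 + \into \chi_{\{z<r_*\}}|\nabla z|^2 \dx = \into \bigl[\beta(z)+\pi(z)-\iota+\Psi(\varphi,\epsilonu)\bigr](r_*-z)^+ \dx.
\end{equation*}
On $\{z<r_*\}$ the bracket on the right is nonpositive by the construction of $M$ and $r_*$, while the initial trace $(r_*-z_0)^+$ vanishes since $r_*\leq \mathrm{ess\,inf}(z_0)$; integrating in time yields $(r_*-z)^+\equiv 0$. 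The upper bound $z\leq r^*$ is obtained symmetrically by testing with $\eta = (z-r^*)^+$ and invoking $\beta(r)\geq M$ on $[r^*,1)$.

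The main obstacle I foresee is the rigorous justification of these truncated tests at the level of the differential inclusion: $\beta(z)$ must be pointwise well-defined and sufficiently integrable for the computation to close, and the chain rule for the positive-part operator must be legitimate in this regularity framework. This is exactly where the interior property $0<z<1$ already obtained in \cite{Cavalleri_Colli_Miranville_Rocca_2025}, the $C^2(0,1)$-regularity of $\hat{\beta}$ from \ref{hyp:beta}, and the enhanced regularity $z\in H^1(0,T;\Vs)\cap L^\infty(0,T;\Ws)$ from \Cref{thm:wellposedness} become indispensable: together they let one replace the inclusion by a pointwise equation and validate the manipulations above.
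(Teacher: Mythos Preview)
Your proposal is correct and follows essentially the same Stampacchia-type truncation argument as the paper: choose thresholds $r_*,r^*$ compatible with $z_0$ so that the blow-up of $\beta$ dominates the bounded source $\iota-\Psi-\pi(z)$, then test \eqref{eq:z_with_spaces} with $(z-r^*)^+$ and $-(z-r_*)^-$ to kill the positive and negative parts. The only cosmetic difference is that the paper writes the sign conditions directly as $\beta(r)+\pi(r)\pm(\norm{\iota}_{L^\infty}+\norm{\Psi}_{L^\infty})\gtrless 0$ rather than isolating a single constant $M$.
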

\begin{proof}
    We claim that there exist $0 < r_* \leq r^* <1$ such that
    \begin{enumerate}
        \item $r_* \leq \text{ess inf}(z_0)$,\label{eq:inf_sep}
        \item $\text{ess sup}(z_0) \leq r^*$, \label{eq:sup_sep}
        \item $\beta(r)+\pi(r) + \norm{\iota}_{L^{\infty}}+ \norm{\Psi}_{L^{\infty}} \leq 0$ for all $r \in (0,r_*)$, \label{eq:sign_ineq_sep_0}
        \item $\beta(r)+\pi(r) - \norm{\iota}_{L^{\infty}} -  \norm{\Psi}_{L^{\infty}} \geq 0$ for all $r \in (r^*,1)$.\label{eq:sign_ineq_sep_1}
    \end{enumerate}
    We can find such $r_*,r^*$ satisfying conditions \ref{eq:inf_sep} and \ref{eq:sup_sep} because of hypothesis \ref{hyp:initial_conditions} ensuring that $z_0$ remains bounded away from $0$ and $1$. Moreover, regarding points \ref{eq:sign_ineq_sep_0} and \ref{eq:sign_ineq_sep_1}, we recall that, from the specific choice we made for the potential $\hat{\beta}$, it holds
    \begin{equation*}
        \beta(r) \to - \infty \text{ if } r \to 0^+, \qquad  \beta(r) \to + \infty \text{ if } r \to 1^-
    \end{equation*}
    where, from hypothesis \ref{hyp:pi}, $\pi \in C^0(\RR)$ so is bounded over $[0,1]$ and, from hypotheses \ref{hyp:iota} and \ref{hyp:psi}, $\iota$ and $\Psi$ are bounded.
    Let $\cchi$ be an arbitrary control in $\Ur$ and $\S(\cchi) = (\varphi,\sigma,\uu,z)$. We test the equation \eqref{eq:z_with_spaces} with $(z-r^*)^+$, obtaining:
    \begin{equation*}
        \begin{split}
            0 &= \frac{1}{2} \frac{\text{d}}{\text{d}t} \into |(z-r^*)^+|^2 \dx
            + \into |\nabla (z-r^*)^+|^2 \dx\\
            &\quad + \into \left[\beta(z)+\pi(z)- \iota +\Psi(\varphi,\epsilonu)\right](z-r^*)^+ \dx\\
            &\geq \frac{1}{2} \frac{\text{d}}{\text{d}t} \into |(z-r^*)^+|^2 \dx, 
        \end{split}
    \end{equation*}
    where we used the property \ref{eq:sign_ineq_sep_1} in order to obtain the inequality. Integrating in time over the interval $(0,t)$, it follows that
    \begin{equation*}
        \into |(z-r^*)^+|^2 \dx \leq \into |(z_0-r^*)^+|^2 \dx = 0
    \end{equation*}
    because, accordingly to property \ref{eq:sup_sep}, $z_0$ is smaller or equal to $r^*$ almost everywhere, thus $(z_0-r^*)^+=0$. This proves that $(z-r^*)^+=0$ and, equivalently, that $z$ is smaller or equal to $r^*$ almost everywhere in $Q$. Proceeding in the same way, we test equation \eqref{eq:z_with_spaces} with $-(z-r_*)^-$. Employing inequality \ref{eq:sign_ineq_sep_0} and then integrating in time and using \ref{eq:inf_sep}, we obtain that $(z-r_*)^-$ is equal to $0$. Thus, we have that $z$ is bigger or equal to $r_*$ almost everywhere in $Q$.
\end{proof}

\begin{remark}
    Let us highlight the fact that thanks to the strict separation property we have just proved, from now on we will treat $\beta$ as a regular potential. Moreover, we trivially deduce that 
    \begin{equation*}
        \norm{\beta(z)}_{L^{\infty}} +\norm{\beta'(z)}_{L^{\infty}} +\norm{\beta''(z)}_{L^{\infty}} \leq C_R,
    \end{equation*}
    since $\beta \in C^2(0,1)$ by hypothesis \ref{hyp:beta}.
\end{remark}

\section{The optimal control problem} \label{section:opt_control_problem}
 In view of the previous \Cref{thm:wellposedness}, we introduce the so-called control-to-state operator or solution operator, which maps every control $\cchi$ to the unique solution to the associated state problem. More precisely, we introduce the state-space
\begin{multline*}
    \VS \coloneqq \left[H^1(0,T;\Hs) \cap L^{\infty}(0,T;\Vs) \cap L^2(0,T;\Ws)\right]\\
    \times \left[H^1(0,T;\Vs') \cap L^{\infty}(0,T;\Hs) \cap L^2(0,T;\Vs)\right] \times W^{1,\infty}(0,T;\Ws) \\
    \times \left[H^1(0,T;\Hs) \cap L^{\infty}(0,T;\Vs) \cap L^2(0,T;\Ws)\right] 
\end{multline*}
which is continuously embedded into the larger
\begin{multline*}
    \V \coloneqq \left[C^0([0,T];\Hs) \cap L^2(0,T;\Vs)\right] \times \left[C^0([0,T];\Hs) \cap L^2(0,T;\Vs)\right]\\ \times H^1(0,T;\Vs_0) \times \left[C^0([0,T];\Hs) \cap L^2(0,T;\Vs)\right].
\end{multline*}
We define
\begin{align}
    \S : \U \to \V,\qquad 
    \cchi \mapsto (\varphi,\sigma,\uu,z).
\end{align}
Notice that $\S$ is well-defined over $\U$ and Lipschitz continuous over $\Ur$.  We introduce the reduced cost functional as
\begin{equation}
    \J(\cchi) \coloneqq \mathcal{J}(\S(\cchi),\cchi).
\end{equation}
Then, the optimal control problem can be stated as
\begin{equation}\label{eq:optimal_control_problem}
        \min_{\cchi \in \Uad} \J(\cchi),
\end{equation}
which means that we search a minimizer for the functional $\mathcal{J}$ subject to the PDE system \eqref{eq:problem}--\eqref{eq:initial_conditions} and constrained to $\Uad$. 

\begin{thm}
    There exists at least one minimizer $\cchi^* \in \Uad$ to the optimal control problem \eqref{eq:optimal_control_problem}. 
\end{thm}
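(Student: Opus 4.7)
The plan is to apply the direct method of the calculus of variations. Since $\J\geq 0$, I fix a minimizing sequence $\{\cchi_n\}\subset \Uad$ with $\J(\cchi_n)\to \inf_{\Uad}\J$. As $\Uad\subset\Ur$, it is bounded in $\U$, so along a (non-relabeled) subsequence $\chi_{1,n}\rightharpoonup \chi_1^{*}$ weakly in $L^{2}(0,T;\Vs)$ and weakly-$*$ in $L^{\infty}(Q)$, while $\chi_{2,n}\rightharpoonup \chi_2^{*}$ weakly-$*$ in $L^{\infty}(Q)$. Because $\Uad$ is nonempty, convex, and closed in $\U$, Mazur's theorem ensures that it is weakly closed, and hence $\cchi^{*}=(\chi_1^{*},\chi_2^{*})\in \Uad$.

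Next, set $(\varphi_n,\sigma_n,\uu_n,z_n)=\S(\cchi_n)$. By the uniform bound in \Cref{thm:wellposedness}, item \ref{thm:wellposedness_energy_estimate}, the sequence is bounded in $\VS$. Up to extracting a further subsequence, I obtain weak and weak-$*$ limits $(\varphi^{*},\sigma^{*},\uu^{*},z^{*})$ in the corresponding spaces. The Aubin--Lions--Simon lemma applied to the $H^{1}$-in-time controls then yields the strong convergences
\[
\varphi_n\to \varphi^{*},\ z_n\to z^{*}\ \text{in } C^{0}([0,T];\Hs)\cap L^{2}(0,T;\Vs),\quad \sigma_n\to \sigma^{*}\ \text{in } L^{2}(0,T;\Hs),
\]
and $\uu_n\to \uu^{*}$ in $C^{0}([0,T];\Vs_0)$, together with a.e. convergence of $\varphi_n,z_n,\sigma_n$ on $Q$.

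Using these convergences and the continuity of $p,g,k_1,k_2,S,\B,\pi,\Psi,\gamma$, together with the strict separation from \Cref{prop:separation_property} -- which allows me to treat $\beta$ as a smooth, bounded function -- I pass to the limit in every term of the weak formulation \eqref{eq:state_problem_eq_with_spaces}. The bilinear couplings $\chi_{1,n}\varphi_n(1-\varphi_n/N)$ and $\chi_{2,n}S(\varphi_n,z_n)$ are treated by pairing the weak $L^{2}(Q)$-convergence of $\chi_{i,n}$ with the strong $L^{2}(Q)$-convergence of the $L^{\infty}$-bounded state factors. In the displacement equation \eqref{eq:u_with_spaces}, $\B(\varphi_n,z_n)\to \B(\varphi^{*},z^{*})$ strongly by dominated convergence, which combined with the weak convergence of $\varepsilon(\uu_n)$ identifies the limit when tested against $\varepsilon(\ttheta)$. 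The uniqueness in \Cref{thm:wellposedness}, item \ref{thm:wellposedness_energy_estimate}, then gives $(\varphi^{*},\sigma^{*},\uu^{*},z^{*})=\S(\cchi^{*})$.

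It remains to show that $\J$ is sequentially weakly lower semicontinuous along the minimizing sequence. The quadratic tracking norms and $\|\cchi\|^{2}_{L^{2}(Q)}$ are weakly l.s.c.; the linear terms $\alpha_3\into\varphi(T)\dx$ and $\alpha_8\into z(T)\dx$ pass to the limit thanks to the strong convergence in $C^{0}([0,T];\Hs)$; and for the stress term I use that $\sqrt{\gamma(\varphi_n)}\to \sqrt{\gamma(\varphi^{*})}$ strongly in $L^{\infty}(Q)$ by the a.e. convergence of $\varphi_n$ together with the uniform bound on $\gamma$ from \ref{hyp:n}, whence $\sqrt{\gamma(\varphi_n)}\,\varepsilon(\uu_n)\rightharpoonup \sqrt{\gamma(\varphi^{*})}\,\varepsilon(\uu^{*})$ in $L^{2}(Q)$ and the $L^{2}$-norm is weakly l.s.c. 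Consequently $\J(\cchi^{*})\leq \liminf_n \J(\cchi_n)=\inf_{\Uad}\J$, and $\cchi^{*}$ is optimal. I expect the main technical hurdle to be securing enough compactness -- through Aubin--Lions -- to pass to the limit in the products between weakly converging controls and nonlinear functions of the states, and in the $\varphi$-dependent weight $\gamma(\varphi)$ appearing inside the stress term of the cost functional.
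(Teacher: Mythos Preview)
Your approach is the same direct method used in the paper: minimizing sequence, weak compactness in $\Uad$, uniform state bounds plus Aubin--Lions for strong compactness, passage to the limit in the weak formulation, and weak lower semicontinuity of $\J$. Two technical slips should be fixed.

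First, you claim $\sqrt{\gamma(\varphi_n)}\to\sqrt{\gamma(\varphi^{*})}$ strongly in $L^{\infty}(Q)$. Almost-everywhere convergence together with a uniform $L^{\infty}$ bound does \emph{not} yield $L^{\infty}$ convergence (take characteristic functions of shrinking sets). The paper instead uses dominated convergence to obtain $\sqrt{\gamma(\varphi_n)}\,\eta\to\sqrt{\gamma(\varphi^{*})}\,\eta$ strongly in $L^{2}(Q)$ for every fixed $\eta\in L^{2}(Q)$; pairing this with the weak $L^{2}$ convergence of $\varepsilon(\uu_n)$ gives weak convergence of the product $\sqrt{\gamma(\varphi_n)}\,\varepsilon(\uu_n)$, which is all that is needed for lower semicontinuity of that term.

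Second, you do not treat the final-time term $\tfrac{\alpha_5}{2}\norm{\sigma(T)-\sigma_{\Omega}}_{\Hs}^{2}$. Your stated strong convergence of $\sigma_n$ is only in $L^{2}(0,T;\Hs)$, which says nothing about $\sigma_n(T)$. The paper argues that $\{\sigma_n(T)\}$ is bounded in $\Hs$ (via the continuous embedding $H^{1}(0,T;\Vs')\cap L^{2}(0,T;\Vs)\hookrightarrow C^{0}([0,T];\Hs)$) and that $\sigma_n\to\sigma^{*}$ strongly in $C^{0}([0,T];\Vs')$ by Aubin--Lions; these together identify the weak $\Hs$-limit of $\sigma_n(T)$ as $\sigma^{*}(T)$, after which weak lower semicontinuity of the $\Hs$-norm handles this term.
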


\begin{proof} 
    The reduced cost functional is proper and non-negative, so $\inf_{\cchi \in \Uad}\J(\cchi)$ is finite and non-negative. Let $\{\cchi_n\}_{n \in \NN} \subseteq \Uad$ be a minimizing sequence for $\J$, meaning that
    \begin{equation*}
        \inf_{\cchi \in \Uad} \J(\cchi) = \lim_{n\to+\infty} \J(\cchi_n).
    \end{equation*}
    We denote the corresponding solution to the state system as $(\varphi_n,\sigma_n,\uu_n,z_n) = \S(\cchi_n)$. Since the sequence $\{\cchi_n\}_{n\in\NN}\in \Uad$, it is uniformly bounded in $\U$ and, consequentially, there exists a $\cchi^* \in \U$ such that, along a subsequence that we do not relabel, 
    \begin{alignat*}{2}
        \chi_{n,1} \to \chi_1^* &\qquad \text{weakly-}\ast &&\qquad \text{in } L^2(0,T;\Vs)\cap L^{\infty}(Q),\\
        \chi_{n,2} \to \chi_2^* &\qquad \text{weakly-}\ast &&\qquad \text{in } L^{\infty}(Q).
    \end{alignat*}
    Notice that $\Uad$ is convex and closed in the space $L^2(0,T;\Vs) \times L^2(Q)$ thus, it is sequentially weakly closed. This justifies the fact that the limit $\cchi^*$ belongs to $\Uad$.
    Moreover, by the uniform boundedness of the solution sequence from \ref{thm:wellposedness_energy_estimate} in \Cref{thm:wellposedness}, applying Banach--Alaouglu (see, e.g., \cite{brezis2011}) and Aubin--Lions Theorems (see \cite[][Section 8, Corollary 4]{Simon_86}), there exists a $(\varphi^*,\sigma^*,\uu^*,z^*) \in \VS$ such that, along a further subsequence,
    \begin{alignat*}{2}
         \varphi_n \to \varphi^* &\qquad \text{weakly-}\ast &&\qquad\text{in }  H^{1}(0,T;\Hs) \cap L^{\infty}(0,T;\Vs) \cap L^2(0,T;\Ws),\\
         &\qquad \text{strong} &&\qquad\text{in } L^2(0,T;\Vs),\\
         &\qquad \text{a.e.} &&\qquad\text{in } Q,\\
         \sigma_n \to \sigma^* &\qquad \text{weakly-}\ast &&\qquad \text{in }  H^{1}(0,T;\Vs') \cap L^{\infty}(0,T;\Hs) \cap L^2(0,T;\Vs),\\
         &\qquad \text{strong} &&\qquad\text{in } L^2(0,T;\Hs),\\
         &\qquad \text{a.e.} &&\qquad\text{in } Q,\\
         \uu_n \to \uu^* &\qquad \text{weakly-}\ast &&\qquad\text{in }  W^{1,\infty}(0,T;\Wzs),\\
         &\qquad \text{strong} &&\qquad\text{in } C^0([0,T];\Vs_0),\\
         z_n \to z^* &\qquad \text{weakly-}\ast &&\qquad \text{in }  H^{1}(0,T;\Hs) \cap L^{\infty}(0,T;\Vs) \cap L^2(0,T;\Ws),\\
         &\qquad \text{strong} &&\qquad\text{in } L^2(0,T;\Vs),\\
         &\qquad \text{a.e.} &&\qquad\text{in } Q.
    \end{alignat*}
The first step of the proof consists of proving that $\S(\cchi^*)=(\varphi^*,\sigma^*,\uu^*,z^*)$. To do so, thanks to the convergences above, we can pass to the limit in the PDE system satisfied by $(\varphi_n,\sigma_n,\uu_n,z_n)$ and $\cchi_n$, from which $(\varphi^*,\sigma^*,\uu^*,z^*)$ is the unique solution to the state system associated with $\cchi^*$. The second step is showing that 
\begin{equation*}
    \inf_{\cchi \in \Uad} \J(\cchi) = \J(\cchi^*).
\end{equation*}
To this end, we write the reduced cost functional as the sum of the following terms
\begin{align*}
     \J_1(\cchi) &= \frac{\alpha_1}{2} \norm{\varphi-\varphi_Q}_{L^2(Q)}^2 + \frac{\alpha_4}{2} \norm{\sigma-\sigma_Q}_{L^2(Q)}^2+ \frac{\alpha_7}{2} \norm{z-z_Q}_{L^2(Q)}^2 + \frac{\alpha_9}{2}\norm{\cchi}_{L^2(Q)}^2,\\
     \J_2(\cchi) & =  \frac{\alpha_2}{2} \norm{\varphi(T)-\varphi_{\Omega}}_{\Hs}^2+ \alpha_3 \norm{\varphi(T)}_{L^1(\Omega)} \\
     &\quad +\frac{\alpha_5}{2} \norm{\sigma(T)-\sigma_{\Omega}}_{\Hs}^2 + \frac{\alpha_6}{2} \norm{\sqrt{\gamma(\varphi)}\epsilonu}_{L^2(Q)}^2  +  \alpha_8 \norm{z(T)}_{L^1(\Omega)}.
\end{align*}
By the weak convergences above and weak lower semicontinuity of the $L^2$-norm, we obtain
\begin{equation}
    \liminf_{n \to +\infty}{\J_1(\cchi_n)} \geq \J_1(\cchi^*).
\end{equation}

\noindent Notice that, by uniform boundedness of the sequence from \ref{thm:wellposedness_energy_estimate} in \Cref{thm:wellposedness}, and Aubin--Lions Theorem (see \cite[][Section 8, Corollary 4]{Simon_86}), we have the strong convergences
\begin{alignat}{2}
         \varphi_n \to \varphi^* &\qquad \text{strongly} &&\qquad\text{in }  C^0([0,T];\Hs),\label{eq:strong_conv_phi_opt_control}\\
          \sigma_n \to \sigma^* &\qquad \text{strongly} &&\qquad\text{in }  C^0([0,T];\Vs'),\label{eq:strong_conv_sigma_opt_control}\\
         z_n \to z^* &\qquad \text{strongly} &&\qquad\text{in }  C^0([0,T];\Hs).\label{eq:strong_conv_z_opt_control}
\end{alignat}
\noindent Again from  \ref{thm:wellposedness_energy_estimate} in \Cref{thm:wellposedness}, $\{\sigma^*(T)\}_{n \in \NN}$ is uniformly bounded in $\Hs$. Thus, extracting a further subsequence,   
\begin{alignat}{2}
    \sigma_n(T) \to \sigma^*(T) &\qquad \text{weakly} &&\qquad \text{in } \Hs,\label{eq:weak_conv_sigmaT_opt_control}
\end{alignat}
 where we are able to identify the limit with $\sigma^*(T)$ because of convergence \eqref{eq:strong_conv_sigma_opt_control}.
 Since $\varphi_n \to \varphi^*$ a.e. and  $\gamma$ is bounded by hypothesis \ref{hyp:n},
\begin{alignat}{2}
    \sqrt{\gamma(\varphi_n)} \eta \to \sqrt{\gamma(\varphi^*)}\eta &\qquad \text{strongly} &&\qquad \text{in } L^2(0,T;\Hs), \label{eq:strong_conv_n_opt_control}
\end{alignat}
for every $\eta \in L^2(0,T;\Hs)$ thanks to Dominated Convergence Theorem. Moreover, from the convergences enlisted above, we know that
\begin{alignat}{2}
    \varepsilon(\uu_n) \to \varepsilon(\uu^*) &\qquad \text{weakly} &&\qquad \text{in } L^2(0,T;\Hs).\label{eq:weak_conv_epsu_opt_control}
\end{alignat}
Putting together \eqref{eq:strong_conv_n_opt_control} and \eqref{eq:weak_conv_epsu_opt_control}, we deduce that
\begin{alignat}{2}
    \sqrt{\gamma(\varphi_n)}\varepsilon(\uu_n) \to \sqrt{\gamma(\varphi^*)}\varepsilon(\uu^*) &\qquad \text{weakly} &&\qquad \text{in } L^2(0,T;\Hs).\label{eq:weak_conv_u_opt_control}
\end{alignat}
By the weak lower semicontinuity of the $L^2$-norm and the weak convergences \eqref{eq:weak_conv_sigmaT_opt_control}, \eqref{eq:weak_conv_u_opt_control}, and by the strong continuity of the $L^2$-norm and the strong convergences \eqref{eq:strong_conv_phi_opt_control}, \eqref{eq:strong_conv_z_opt_control}, we have
\begin{equation*}
    \liminf_{n\to+\infty}\J_2(\cchi_n) \geq \J_2(\cchi^*)
\end{equation*}
from which the thesis follows.
\end{proof}

\noindent We aim to establish first-order optimality conditions for the optimal control problem \eqref{eq:optimal_control_problem}. To do so, the standard procedure is to prove the Fréchet differentiability of the control-to-state operator. With this purpose, we linearize the state system. 

\section{The linearized state system}\label{section:linearized_system}
We consider a fixed control $\cchi \in \Uad$ with the associated state given by $(\varphi,\sigma,\uu,z) = \S(\cchi)$. For every small $\hh = (h_1,h_2) \in \U$, we consider the perturbed variables
\begin{equation}
    \varphi + \xi, \quad \sigma + \rho, \quad \uu + \oomega, \quad z + \zeta, \quad \cchi + \hh,
\end{equation}
and the corresponding state system. Linearizing it near $(\varphi,\sigma,\uu,z)$, $\cchi$ and approximating the nonlinearities with their first order Taylors'expantions, we have that $(\xi,\rho,\oomega,\zeta)$, $\hh$ satisfy the linear PDE system
\begin{subequations}\label{eq:linearized_problem}
    \begin{align}
        & \partial_t\xi - \Delta \xi = a_1 \xi + a_2 \rho + a_3 \zeta + a_4 h_1 , \label{eq:lin_phi}\\
        & \partial_t\rho - \Delta \rho = b_1 \xi + b_2 \rho + b_3 \zeta + b_4 h_2,\label{eq:lin_lactate}\\
        &  - \diver\left[\A \epsilonot + \B(\varphi,z)\epsilono  \right] = - \diver\left[ \cc_1 \xi + \cc_2 \zeta \right], \label{eq:lin_displacement}\\
        & \partial_t \zeta - \Delta \zeta =  d_1 \xi + \dd_2 : \epsilono + d_3 \zeta, \label{eq:lin_damage}
    \end{align}
\end{subequations}
where, for the sake of better readability, we introduced the following notation:
\begin{subequations}
\allowdisplaybreaks
    \begin{align*}
        a_1 &= U_{,\varphi} = (p(\sigma,z)- \chi_1) \left(1-2\frac{\varphi}{N}\right) -  g(\sigma,z),\\
        a_2 &= U_{,\sigma} =  p_{,\sigma}(\sigma,z) \varphi \left(1-\frac{\varphi}{N}\right) - \varphi g_{,\sigma}(\sigma,z),\\
        a_3 &= U_{,z} = p_{,z}(\sigma,z) \varphi \left(1-\frac{\varphi}{N}\right) - \varphi g_{,z}(\sigma,z),\\
        a_4 &= U_{,\chi_1} = -\varphi\left(1 - \frac{\varphi}{N}\right),\\
        b_1 &= -K_{,\varphi}(\varphi,\sigma,z) +\chi_2 S_{,\varphi}(\varphi,z)\\
            &= - \frac{k_{1,\varphi}(\varphi,z)\sigma}{k_2(\varphi,z)+\sigma} + \frac{k_1(\varphi,z)\sigma k_{2,\varphi}(\varphi,z)}{(k_2(\varphi,z)+\sigma)^2} + \chi_2 S_{,\varphi}(\varphi,z),\\
        b_2 &= -K_{,\sigma}(\varphi,\sigma,z)= - \frac{k_1(\varphi,z)}{k_2(\varphi,z)+\sigma} + \frac{k_1(\varphi,z)\sigma}{(k_2(\varphi,z) + \sigma)^2},\\
        b_3 &= -K_{,z}(\varphi,\sigma,z) +\chi_2 S_{,z}(\varphi,z)\\
            &=- \frac{k_{1,z}(\varphi,z)\sigma}{k_2(\varphi,z)+\sigma} + \frac{k_1(\varphi,z)\sigma k_{2,z}(\varphi,z)}{(k_2(\varphi,z)+\sigma)^2} + \chi_2 S_{,z}(\varphi,z),\\
        b_4 & = S(\varphi,z),\\
        \cc_1 & =- \B_{,\varphi}(\varphi,z)\epsilonu,  \\
        \cc_2 & = - \B_{,z}(\varphi,z)\epsilonu,\\
        d_1 &= -\Psi_{,\varphi}(\varphi,\epsilonu),\\
        \dd_2 &= -\Psi_{,\varepsilon}(\varphi,\epsilonu),\\
        d_3 &= - \beta'(z) -\pi'(z).
    \end{align*}
\end{subequations}

\begin{remark}\label{remark:boundedness_coeff_lin}
    Notice that, since all the assigned functions are Lipschitz continuous and bounded, and because of the regularity we have already proved for $(\varphi,\sigma,\uu,z)$ in \Cref{thm:wellposedness}, 
    \begin{equation*}
        a_1, \dots, a_4, b_1, \dots, b_4, d_1, \dd_2 \in L^{\infty}(Q), 
    \end{equation*}
    and they are uniformly bounded by a constant that depends on $R$. Moreover, 
    \begin{equation*}
        \cc_1, \cc_2 \in L^{\infty}(0,T;L^p)
    \end{equation*}
    for any $p \in [1,6]$ and their norm is uniformly bounded by a certain $C_R$, because 
    \begin{equation*}
        |\cc_1| + |\cc_2| \leq C |\epsilonu|,
    \end{equation*}
    and $\uu \in W^{1,\infty}(0,T; \Ws) \hookrightarrow W^{1,\infty}(0,T; W^{1,p})$. Regarding the last term, thanks to the regularity of $\beta$ in its domain and to the separation property we proved in \Cref{prop:separation_property}, 
    \begin{equation*}
        d_3 \in L^{\infty}(Q),
    \end{equation*}
    and its norm is bounded by a constant that depends on $R$.
\end{remark}
\noindent We couple the system \eqref{eq:linearized_problem} with the following boundary and initial conditions
\begin{align}
    \partial_{\nnu}\xi = \partial_{\nnu}\zeta = 0 \qquad \partial_{\nnu}\rho = - \rho, \qquad \oomega = 0 \qquad \text{on } \Sigma, \label{eq:linearized_boundary_cond}\\
    \xi(0) = \rho(0) = \zeta(0) = 0, \qquad \oomega(0) = \mathbf{0} \qquad \text{in } \Omega. \label{eq:linearized_initial_cond}
\end{align}

\noindent Notice that, even if for the formal derivation of the linearized system we started from a small perturbation $\hh \in \Uad$, the obtained system  \eqref{eq:linearized_problem}--\eqref{eq:linearized_initial_cond} makes sense for every $\hh \in L^2(Q) \times L^2(Q)$ .

\begin{prop} \label{prop:existence_lin}
     For every $\cchi \in \Ur$ with associated $\S(\cchi) = (\varphi,\sigma,\uu,z) \in \V$ and for every $\hh \in L^2(Q) \times L^2(Q)$ there exists a unique solution $(\xi, \rho, \oomega, \zeta)$ to the linearized state system \eqref{eq:linearized_problem}--\eqref{eq:linearized_initial_cond} in the sense that
    \begin{gather*}
            \xi \in   H^1(0,T;\Hs) \cap L^{\infty}(0,T; \Vs) \cap L^{2}(0,T;\Ws),\\
            \rho \in H^1(0,T;\Vs') \cap L^{\infty}(0,T;\Hs) \cap L^2(0,T;\Vs), \\
            \oomega \in W^{1,\infty}(0,T;\Vs_0),\\
            \zeta \in   H^1(0,T;\Hs) \cap L^{\infty}(0,T; \Vs) \cap L^{2}(0,T;\Ws) 
    \end{gather*}
    with 
   \begin{gather*}
        \xi(0) = \rho(0) = \zeta(0) = 0, \qquad \oomega(0) = \mathbf{0}
   \end{gather*}
   such that
   \begin{subequations}\label{eq:lin_problem_in_eq_with_spaces}
        \begin{align}
        & \into \partial_t\xi \eta \dx + \into \nabla \xi \cdot \nabla \eta \dx = \into [ a_1 \xi + a_2 \rho + a_3 \zeta + a_4 h_1] \eta \dx, \label{eq:phi_lin_with_spaces}\\
        &  \duality{\partial_t\rho}{\eta }_{\Vs} + \into \nabla \rho \cdot \nabla \eta \dx +  \intg \rho \eta \dA= \into [b_1 \xi + b_2 \rho + b_3 \zeta + b_4 h_2] \eta \dx,\label{eq:sigma_lin_with_spaces}\\
        &  \into  \left[\A \epsilonot + \B(\varphi,z)\epsilono  \right] : \varepsilon(\ttheta) \dx = \into  [\cc_1 \xi + \cc_2 \zeta ] : \varepsilon(\ttheta) \dx,\label{eq:u_lin_with_spaces}\\
        & \into \partial_t \zeta \eta \dx + \into \nabla \zeta \cdot \nabla \eta \dx = \into \left[ d_1 \xi + \dd_2 : \epsilono + d_3 \zeta \right] \eta \dx,\label{eq:z_lin_with_spaces}
    \end{align}
\end{subequations}
a.e. in $(0,T)$, for every $\eta \in \Vs$ and $\ttheta \in \Vs_0$. Moreover, the solution satisfies the following estimate:
\begin{multline}\label{eq:lin_sys_estimate}
    \norm{\xi}_{H^1(\Hs) \cap L^{\infty}(\Vs) \cap L^2(\Ws)} + \norm{\rho}_{H^1(\Hs) \cap L^{\infty}(\Vs) \cap L^2(\Ws)} +
    \norm{\oomega}_{W^{1,\infty}(\Vs_0)}\\
    + \norm{\xi}_{H^1(\Hs) \cap L^{\infty}(\Vs) \cap L^2(\Ws)} \leq C_R \norm{\hh}_{L^2(Q)^2}.
\end{multline}
\end{prop}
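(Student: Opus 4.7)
The plan is to establish existence by a Faedo–Galerkin approximation, derive uniform a priori bounds by a cascade of energy tests, and pass to the limit; uniqueness then follows from linearity by applying the same estimates to the difference of two solutions.

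First I would fix orthonormal bases of $\Hs$ consisting of eigenfunctions of the Neumann Laplacian (for $\xi, \rho, \zeta$) and of the Dirichlet elasticity operator associated with $\A$ (for $\oomega$), and formulate finite-dimensional approximations $(\xi_n, \rho_n, \oomega_n, \zeta_n)$. Since all coupling coefficients $a_i, b_i, d_1, \dd_2, d_3$ are in $L^{\infty}(Q)$ and $\cc_1, \cc_2 \in L^{\infty}(0,T;L^p)$ for every $p \in [1,6]$ by \Cref{remark:boundedness_coeff_lin}, and since the data $h_1, h_2$ are in $L^2(Q)$, the Galerkin system reduces to a linear ODE system with $L^2$-in-time coefficients whose solvability on $[0,T]$ is standard. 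The $\oomega_n$-equation is algebraic at each fixed time once we view it as an elliptic problem with source $\cc_1 \xi + \cc_2 \zeta + \diver[\B\epsilonu\cdot]$ terms, but it is cleaner to differentiate formally and test as below.

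For the basic energy estimate I would test \eqref{eq:phi_lin_with_spaces} with $\xi$, \eqref{eq:sigma_lin_with_spaces} with $\rho$, the displacement equation \eqref{eq:u_lin_with_spaces} with $\partial_t \oomega$ (exploiting the strict positive-definiteness of $\A$ to extract $\norm{\epsilonot}_{\Hs}^2$), and \eqref{eq:z_lin_with_spaces} with $\zeta$, then sum. Bounded coefficients produce terms controlled by the unknowns themselves; the critical coupling $\cc_i \xi$ and $\cc_i \zeta$ in the $\oomega$-equation is handled by Hölder with $\cc_i \in L^{\infty}(L^6)$ and $\xi,\zeta\in\Vs\hookrightarrow L^3$, combined with \Cref{lemma:special_case_gagliardo_nirenberg} and Young's inequality to absorb $\epsilonot$ into the left-hand side. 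Gronwall's lemma then yields the $L^{\infty}(\Hs)\cap L^2(\Vs)$ bound for $\xi, \rho, \zeta$, the $L^{\infty}(\Vs_0)$ bound for $\oomega$, and an $L^2(\Vs_0)$ bound for $\partial_t\oomega$, each proportional to $\norm{\hh}_{L^2(Q)^2}$.

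The higher regularity is obtained by a second round of tests. Multiplying the $\xi$-equation by $\partial_t\xi$ (or equivalently $-\Delta\xi$) and using the $L^{\infty}$ bounds on $a_1,\dots, a_4$ yields the $H^1(\Hs)\cap L^{\infty}(\Vs)\cap L^2(\Ws)$ bound for $\xi$; the same argument works for $\zeta$, where $\dd_2:\epsilono$ is placed in $L^2(\Hs)$ using $\dd_2\in L^{\infty}(Q)$ and $\epsilono\in L^{\infty}(\Vs_0)\hookrightarrow L^{\infty}(L^6)$. For $\rho$, testing with $\rho$ and then using comparison in the equation together with the $L^2(\Vs)$ estimates on the source gives the $H^1(\Vs')$ regularity. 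For $\oomega$, the positive-definiteness of $\A$ upgrades the already-derived $L^{\infty}$-in-time control of $\oomega$ to $W^{1,\infty}(\Vs_0)$ by taking the essential supremum in the energy identity tested with $\partial_t \oomega$ and absorbing. Banach–Alaoglu and Aubin–Lions (\cite{Simon_86}) then provide weak-$\ast$ and strong convergences sufficient to pass to the limit in every linear term of \eqref{eq:lin_problem_in_eq_with_spaces}, including the boundary term through trace compactness.

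Finally, uniqueness is immediate: the difference of two solutions satisfies the same system with $\hh=0$ and zero initial data, so the energy estimate \eqref{eq:lin_sys_estimate} forces it to vanish. The main obstacle I anticipate is the quasi-static character of the $\oomega$-equation, which prevents us from extracting a classical evolution energy and forces us to test with $\partial_t\oomega$ against a right-hand side that only lives in $L^{\infty}(L^p)$; consequently the cross-terms $\cc_1\xi + \cc_2\zeta$ and $\B(\varphi,z)\epsilono$ must be manipulated via Sobolev embeddings and the Gagliardo–Nirenberg inequality with careful Young-type absorptions so that the Gronwall argument effectively couples all four estimates simultaneously.
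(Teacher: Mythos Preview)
Your proposal is correct and follows essentially the same approach as the paper: a Galerkin scheme, the same choice of test functions ($\xi$, $\rho$, $\partial_t\oomega$, $\zeta$) for the basic energy estimate, the same H\"older/Gagliardo--Nirenberg/Young treatment of the $\cc_i$-terms via $L^{\infty}(L^6)$, Gronwall, then standard parabolic regularity, with uniqueness from linearity. The paper's proof is actually terser than yours---it only writes out the first energy estimate explicitly and defers the higher regularity to ``standard parabolic regularity estimates''---so your sketch in fact fills in slightly more detail than the original.
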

\begin{proof}
 Existence can be proved using a Galerkin scheme. Since it is a standard procedure, we will show only the formal a priori estimates that are necessary to pass to the limit from the discrete to the continuous system. We test \eqref{eq:lin_phi} with $\xi$, \eqref{eq:lin_lactate} with $\rho$, \eqref{eq:lin_damage} with $\zeta$, and sum the three equations. Applying the Young inequality and \Cref{remark:boundedness_coeff_lin}, we obtain
\begin{equation}\label{eq:enqergy_ineq1_lin}
    \begin{split}
        \frac{\text{d}}{\text{d}t}& \left( \norm{\xi}_{\Hs}^2 + \norm{ \rho}_{\Hs}^2 + \norm{\zeta}_{\Hs}^2\right) + \norm{\nabla \xi}_{\Hs}^2 + \norm{\nabla \rho}_{\Hs}^2 + \norm{\nabla \zeta}_{\Hs}^2  \\
    & \leq C_R \left(\norm{\xi}_{\Hs}^2 + \norm{\rho}_{\Hs}^2 + \norm{\epsilono}_{\Hs}^2  + \norm{\zeta}_{\Hs}^2  + \norm{h_1}_{\Hs}^2 + \norm{h_2}_{\Hs}^2\right).
    \end{split}
\end{equation}
Testing \eqref{eq:lin_displacement} with $\epsilonot$ and employing the fact that $\A$ is positive definite, we have
    \begin{equation*}
        \begin{split}
            C_{\A} \norm{\epsilonot}_{\Hs}^2 &\leq \into \A \epsilonot : \epsilonot \dx\\
            &= \into \left(-B(\varphi,z)\epsilono:\epsilonot + \xi \cc_1 : \epsilonot + \zeta \cc_2 : \epsilonot \right) \dx.
        \end{split}
    \end{equation*}
Recalling that $\B$ is bounded and $\cc_1,\cc_2$ are uniformly bounded in $L^{\infty}(0,T;L^6)$ thanks to \Cref{remark:boundedness_coeff_lin}, we estimate the right-hand side with the H\"older and the Young inequalities. We get
    \begin{equation*}
        \begin{split}
             C_{\A} \norm{\epsilonot}_{\Hs}^2 
             &\leq \left(C \norm{\epsilono}_{\Hs} + \norm{\xi}_{L^3}\norm{\cc_1}_{L^6} + \norm{\zeta}_{L^3}\norm{\cc_2}_{L^6}\right) \norm{\epsilonot}_{\Hs}\\
             & \leq \delta \norm{\epsilonot}_{\Hs}^2 + C_{R,\delta} \left( \norm{\epsilono}_{\Hs}^2 + \norm{\xi}_{L^3}^2 + \norm{\zeta}_{L^3}^2\right),
        \end{split}
    \end{equation*}
    where $\delta$ is a small positive constant yet to be determined. By means of the interpolation inequality in \Cref{lemma:special_case_gagliardo_nirenberg} and again the Young inequality, we have
    \begin{equation}\label{eq:energy_ineq2_lin}
        \begin{split}
             C_{\A} \norm{\epsilonot}_{\Hs}^2 \leq  \delta \left(\norm{\epsilonot}_{\Hs}^2 + \norm{\nabla \xi}_{\Hs}^2 + \norm{\nabla \zeta}_{\Hs}^2\right)\\ + C_{R,\delta} \left( \norm{\epsilono}_{\Hs}^2 + \norm{\xi}_{\Hs}^2 + \norm{\zeta}_{\Hs}^2\right). 
        \end{split}
    \end{equation}
    Recalling that
    \begin{equation*}
        \norm{\epsilono}_{\Hs}^2 \leq \intt \norm{\epsilonot}_{\Hs}^2 \ds 
    \end{equation*}
    because $\oomega(0)=0$,
    summing equations \eqref{eq:enqergy_ineq1_lin} and \eqref{eq:energy_ineq2_lin}, and fixing $\delta$ small enough leads to 
    \begin{equation*}
        \begin{split}
             \frac{\text{d}}{\text{d}t}& \left( \norm{\xi}_{\Hs}^2 + \norm{ \rho}_{\Hs}^2 + \norm{\zeta}_{\Hs}^2\right) + \norm{\nabla \xi}_{\Hs}^2 + \norm{\nabla \rho}_{\Hs}^2 +  \norm{\epsilonot}_{\Hs}^2 + \norm{\nabla \zeta}_{\Hs}^2  \\
    & \leq C_R \left(\norm{\xi}_{\Hs}^2 + \norm{\rho}_{\Hs}^2  + \norm{\zeta}_{\Hs}^2   + \intt \norm{\epsilonot}_{\Hs}^2 \ds  + \norm{\hh}_{L^2(Q)}^2\right).
        \end{split}
    \end{equation*}
    Applying Gronwall inequality and then standard parabolic regularity estimates, we recover the a priori estimates we need to pass in the Galerkin discretization. This way, existence is proved as well as the estimate \eqref{eq:lin_sys_estimate} in the statement. Uniqueness follows from the estimate we have just proved and the fact that the system is linear.
\end{proof}

\begin{remark}
    For $\cchi \in \Ur$ fixed, it is convenient to denote the solution to the linearized state system associated with a perturbation $\hh \in \U$ as $(\xih,\rhoh,\oomegah,\zetah)$. From this result, it follows that the map 
    \begin{equation*}
        \U \subseteq L^2(Q) \times L^2(Q) \to \V, \qquad \hh \mapsto (\xih,\rhoh,\oomegah,\zetah),
    \end{equation*}
    is linear and continuous. 
\end{remark}

\section{Differentiability of the control-to-state operator}\label{section:diff_solution_op}
In this section, we will prove the Fréchet differentiability of the control-to-state operator. 

\begin{thm}\label{thm:diff_solution_op}
    The control-to-state operator $\S : \U \to \V$ is Fréchet differentiable in $\Ur$ and the Fréchet derivative of $\S$ in $\cchi \in \Ur$ is given by 
    \begin{equation*}
        D\S(\cchi)\hh = (\xih,\rhoh,\oomegah,\zetah)
    \end{equation*}
    for every $\hh \in \U$.
\end{thm}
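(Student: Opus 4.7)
The plan is to prove Fréchet differentiability by the classical remainder-estimation argument. Fix $\cchi\in\Ur$ and let $\hh\in\U$ be small enough so that $\cchi+\hh\in\Ur$. Denote $(\varphi,\sigma,\uu,z)=\S(\cchi)$ and $(\overline{\varphi},\overline{\sigma},\overline{\uu},\overline{z})=\S(\cchi+\hh)$, and let $(\xih,\rhoh,\oomegah,\zetah)$ be the solution of the linearized system at $\cchi$ with perturbation $\hh$, given by \Cref{prop:existence_lin}. Introduce the remainders
\begin{equation*}
\widetilde{\varphi}:=\overline{\varphi}-\varphi-\xih,\quad \widetilde{\sigma}:=\overline{\sigma}-\sigma-\rhoh,\quad \widetilde{\uu}:=\overline{\uu}-\uu-\oomegah,\quad \widetilde{z}:=\overline{z}-z-\zetah,
\end{equation*}
which all vanish at $t=0$ and inherit the same boundary conditions as the linearized variables. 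By the very definition of Fréchet derivative, the theorem is equivalent to the bound
\begin{equation*}
\|(\widetilde{\varphi},\widetilde{\sigma},\widetilde{\uu},\widetilde{z})\|_{\V}=o(\|\hh\|_{\U})\quad\text{as }\|\hh\|_{\U}\to 0,
\end{equation*}
which I will in fact strengthen to a bound of the form $C_R\|\hh\|_{L^2(Q)}^{1+\alpha}$ for some exponent $\alpha>0$.

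The first step is to write out the PDE system satisfied by the remainders. Subtracting the state equations \eqref{eq:state_problem_eq_with_spaces} at $\cchi$ from those at $\cchi+\hh$ and then subtracting the linearized system \eqref{eq:lin_problem_in_eq_with_spaces}, I will expand each nonlinearity appearing in \eqref{eq:problem} --- $U$, $K$, $S$, $\B(\varphi,z)\epsilonu$, $\beta(z)$, $\pi(z)$, $\Psi(\varphi,\epsilonu)$ --- via a second-order Taylor formula with integral remainder about the base arguments coming from $\S(\cchi)$. The outcome is that $(\widetilde{\varphi},\widetilde{\sigma},\widetilde{\uu},\widetilde{z})$ solves a system of exactly the form \eqref{eq:linearized_problem}, with the same coefficients $a_i,b_i,\cc_i,d_i,\dd_2$ as listed there, plus quadratic \emph{remainder source terms} that are pointwise controlled by products of two factors chosen from $\overline{\varphi}-\varphi$, $\overline{\sigma}-\sigma$, $\overline{z}-z$, $\varepsilon(\overline{\uu})-\epsilonu$, $h_1$, $h_2$, multiplied by bounded Taylor coefficients. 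The $C^2$ hypotheses \ref{hyp:eq_phi}--\ref{hyp:eq_displacement}, \ref{hyp:pi}, \ref{hyp:psi} make these expansions legitimate, and the strict separation property of \Cref{prop:separation_property} --- which applies uniformly to both $z$ and $\overline{z}$ since $\cchi,\cchi+\hh\in\Ur$ --- guarantees that $\beta$ and $\pi$ may be treated as $C^2$ functions with derivatives bounded by $C_R$ along the range of the solutions.

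The second step is an energy estimate on this remainder system that mirrors the one used in the proof of \Cref{prop:existence_lin}: test the equations for $\widetilde{\varphi}$, $\widetilde{\sigma}$, $\widetilde{z}$ with themselves and the elasticity equation with $\varepsilon(\partial_t\widetilde{\uu})$, then sum. The linear parts are absorbed exactly as in \Cref{prop:existence_lin}. What remains is to bound the quadratic remainder forcings in $L^2(0,T;\Hs)$ (or, for the elasticity equation, once paired with the elastic test) by a power of $\|\hh\|_{L^2(Q)}$ strictly larger than one. This is done by combining the continuous-dependence inequality of \Cref{thm:wellposedness}, which gives first-order smallness in the strongest available norms, with the refined interpolation bounds of \Cref{remark:add_continuous_estimate}. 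For a typical scalar product, H\"older yields
\begin{equation*}
\|(\overline{\varphi}-\varphi)(\overline{z}-z)\|_{L^2(\Hs)}\le \|\overline{\varphi}-\varphi\|_{L^\infty(L^4)}\,\|\overline{z}-z\|_{L^2(L^4)}\le C_R\|\hh\|_{L^2(Q)}^{5/4},
\end{equation*}
and analogous fractional gains beyond $1$ are available for every other quadratic product, including those involving $h_1$ or $h_2$.

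The step I expect to be most delicate is the quasi-static elasticity equation, because its Taylor expansion produces mixed terms such as $[\partial_\varphi^2\B](\overline{\varphi}-\varphi)^2\epsilonu$ and $[\partial_\varphi\B](\overline{\varphi}-\varphi)\bigl(\varepsilon(\overline{\uu})-\epsilonu\bigr)$ that must be controlled after testing with $\varepsilon(\partial_t\widetilde{\uu})$, without a direct dissipative control on $\widetilde{\uu}$ itself. Closing this estimate relies on pairing the $L^\infty(L^4)$ control \eqref{eq:add_continuous_estimate_inf4} on $\varepsilon(\overline{\uu})-\epsilonu$ against the $L^4(L^\infty)$ control \eqref{eq:add_continuous_estimate_4inf} on $\overline{\varphi}-\varphi$ and $\overline{z}-z$, in precisely the same spirit as in the energy estimate for \Cref{prop:existence_lin}. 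Once every remainder forcing has been bounded by $C_R\|\hh\|_{L^2(Q)}^{1+\alpha}$, an application of Gronwall's inequality together with the parabolic and elliptic regularity arguments already employed in the proof of \Cref{prop:existence_lin} delivers
\begin{equation*}
\|(\widetilde{\varphi},\widetilde{\sigma},\widetilde{\uu},\widetilde{z})\|_{\V}\le C_R\|\hh\|_{L^2(Q)}^{1+\alpha}=o(\|\hh\|_{\U}),
\end{equation*}
which is exactly the Fréchet differentiability of $\S$ at $\cchi$ and identifies $D\S(\cchi)\hh=(\xih,\rhoh,\oomegah,\zetah)$, as claimed.
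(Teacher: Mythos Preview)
Your proposal is correct and follows essentially the same route as the paper: define the remainder quadruple, derive its PDE system via second-order Taylor expansion with integral remainder (using the strict separation property to treat $\beta$ as $C^2$), perform the same energy estimates as in \Cref{prop:existence_lin}, and close with Gronwall after invoking the continuous-dependence estimate and the interpolation bounds of \Cref{remark:add_continuous_estimate}. The paper obtains the explicit exponent $\alpha=\tfrac14$, i.e.\ $\|(\widetilde{\varphi},\widetilde{\sigma},\widetilde{\uu},\widetilde{z})\|_{\V}\le C_R\|\hh\|_{\U}^{5/4}$, which is consistent with the sample estimate you display.
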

\begin{proof}
    We consider a fixed and arbitrary $\cchi \in \Uad$ with $\S(\cchi)=(\varphi,\sigma,\uu,z)$. Our goal is to prove that
    \begin{equation}\label{eq:limit_diff}
        \lim_{\norm{\hh}_{\U} \to 0} \frac{\norm{\S(\cchi + \hh) - \S(\cchi) - (\xih,\rhoh,\oomegah,\zetah)}_{\V}}{\norm{\hh}_{\U}} = 0,
    \end{equation}
    from which the thesis follows.
    We introduce the notation $\S(\cchi + \hh) = (\varphih,\sigmah,\uuh,\zh)$ and 
    \begin{equation*}
        \Phih \coloneqq \varphih - \varphi - \xih, \quad 
        \lambdah \coloneqq \sigmah - \sigma - \rhoh, \quad
        \wwh \coloneqq \uuh - \uu - \oomegah, \quad \muh \coloneqq \zh - z - \zetah.
    \end{equation*}
    Since $\cchi$ belongs to $\Uad$ which is in turn contained in the open set $\Ur$, there exists a constant $C_{\cchi}$ such that, for every $\hh \in \Ur$ with $\norm{\hh}_{\U} \leq C_{\cchi}$, the control $\cchi + \hh$ still belongs to $\Ur$. Without loss of generality, since our aim is to pass to the limit as $\norm{\hh}_{\U}$ goes to $0$, we will consider only $\hh$ with a small norm in this sense. We are going to prove that
    \begin{equation}\label{eq:diff_thesis}
        \norm{(\Phih,\lambdah, \wwh, \muh)}_{\V} \leq C_R \norm{\hh}_{\U}^\frac{5}{4},
    \end{equation}
    which yields to the limit \eqref{eq:limit_diff}. To do so, we consider the PDE system satisfied by $(\Phih,\lambdah,\wwh,\muh)$ which can be trivially obtained by \Cref{thm:wellposedness} and \Cref{prop:existence_lin}. Explicitly, the following equations are satisfied
    \begin{subequations}\label{eq:system_diff}
        \begin{align}
        & \into \partial_t\Phih \eta \dx  + \into \nabla \Phih \cdot \nabla \eta \dx = \into [ A_1 + A_2] \eta \dx, \label{eq:phi_diff}\\
        &  \duality{\partial_t\lambdah}{\eta }_{\Vs} + \into \nabla \lambdah \cdot \nabla \eta \dx +  \intg \lambdah \eta \dA= \into [B_1 + B_2 + B_3] \eta \dx,\label{eq:sigma_diff}\\
        &  \into  \left[\A \epsilonwth + \C_1\right] : \varepsilon(\ttheta) \dx = 0,\label{eq:u_diff}\\
        & \into \partial_t \muh \eta \dx + \nabla \muh \cdot \nabla \eta \dx = \into \left[ D_1 + D_2\right] \eta \dx,\label{eq:z_diff}
    \end{align}
\end{subequations}
for every $\eta \in \Vs$ and $\ttheta \in \Vs_0$ as well as the initial conditions
\begin{equation}
    \Phih(0)=0, \quad \lambdah(0)=0,\quad \wwh(0)=0, \quad \muh(0)=0.
\end{equation}
Here we have introduced the notation:
\allowdisplaybreaks
\begin{align}
    A_1 =& U(\varphih,\sigmah,\zh,\chi_1) -  U(\varphi,\sigma,z,\chi_1)\nonumber\\
    &- \left[ U_{,\varphi}(\varphi,\sigma,z,\chi_1)\xih + U_{,\sigma}(\varphi,\sigma,z,\chi_1)\rhoh + U_{,z}(\varphi,\sigma,z,\chi_1)\zetah\right], \nonumber\\
    A_2 =& -\bigg[\varphih \Big(1-\frac{\varphih}{N}\Big) - \varphi \left(1-\frac{\varphi}{N}\right)\bigg]h_1,\label{eq:A2}\\
    B_1 = & \, K(\varphih,\sigmah,\zh) - K(\varphi,\sigma,z) - \left[K_{,\varphi}(\varphi,\sigma,z) \xih + K_{,\sigma}(\varphi,\sigma,z) \rhoh + K_{,z}(\varphi,\sigma,z) \zetah \right],\nonumber\\
    B_2 = & \left[ S(\varphih,\zh) - S(\varphi,z) - \left(S_{,\varphi}(\varphi,z) \xih + S_{,z}(\varphi,z) \zetah \right)\right] \chi_2,\nonumber\\
    B_3 = & [S(\varphih,\zh)-S(\varphi,z)]h_2,\label{eq:B3}\\
    \C_1 =& \B(\varphih,\zh)\epsilonuh - \B(\varphi,z)\epsilonu\nonumber\\
    &- \left[ \B_{,\varphi}(\varphi,z)\epsilonu\xih + \B(\varphi,z)\epsilonoh +\B_{,z}(\varphi,z)\epsilonu \zetah \right],\nonumber\\
    D_1 =& - \left[\beta(\zh) + \pi(\zh) - (\beta(z) + \pi(z)) -(\beta'(z) + \pi'(z))\zetah\right],\nonumber\\
    D_2 =& -\left[ \Psi(\varphih,\epsilonuh) - \Psi(\varphi,\epsilonu) - (\Psi_{,\varphi}(\varphi,\epsilonu)\xih + \Psi_{,\varepsilon}(\varphi,\epsilonu):\epsilonoh)\right]\nonumber.
\end{align}
The next step is testing each equation in \eqref{eq:system_diff} with a proper term and doing some estimates. For this reason, it is convenient to rewrite some of the known coefficient functions we have just introduced. To this end, we recall that, according to Taylor's theorem with an integral reminder, for a function $l \in W^{2,2}([0,1])$ it holds
\begin{equation*}
    l(1) = l(0) + l'(0) + \int_0^1 l''(s)(1-s) \ds.
\end{equation*}
Let's take $A_1$ into account. We introduce $\yyh=(\varphih,\sigmah,\zh,\chi_1)$, $ \yy=(\varphi,\sigma,z,\chi_1)$, and the function
\begin{equation*}
    l(s)=U(s\yyh + (1-s)\yy ),
\end{equation*}
which is $W^{2,\infty}$ because $U$ has this regularity and $s\yyh +(1-s)\yy$ is $L^{\infty}$.
If we apply the formula above, we get
\begin{equation*}
    \begin{split}
    U(\yyh) &= U(\yy) + \nabla U(\yy) \cdot (\yyh-\yy)\\
    &\quad+ \int_0^1 \left[D^2 U (s\yyh +(1-s)\yy) (\yyh -\yy)  \cdot (\yyh -\yy)\right](1-s) \ds\\
    & \eqcolon U(\yy) + \nabla U(\yy) \cdot (\yyh-\yy)+ \mathfrak{A}_1 (\yyh -\yy)  \cdot (\yyh -\yy).
    \end{split}
\end{equation*}
Notice that the matrix
\begin{equation*}
    \mathfrak{A}_1 = \int_0^1 D^2 U (s\yyh +(1-s)\yy) (1-s) \ds
\end{equation*}
as well as $\nabla U$ are bounded and their $L^{\infty}$-norm are uniformly controlled by a constant that depends on $R$. Comparing the equality we have just obtained with $A_1$ leads to 
\begin{equation}\label{eq:A1}
    \begin{split}
        A_1 &= U_{,\varphi}(\varphi,\sigma,z,\chi_1)\Phih + U_{,\sigma}(\varphi,\sigma,z,\chi_1)\lambdah+ U_{,z}(\varphi,\sigma,z,\chi_1)\muh\\
        &\quad + \mathfrak{A}_1(\varphih-\varphi,\sigmah-\sigma,\zh-z,0) \cdot (\varphih-\varphi,\sigmah-\sigma,\zh-z,0).
    \end{split}
\end{equation}
Proceeding in the same way, we have:
\begin{align}
    \begin{split}
        B_1 = & K_{,\varphi}(\varphi,\sigma,z) \Phih + K_{,\sigma}(\varphi,\sigma,z) \lambdah + K_{,z}(\varphi,\sigma,z) \muh \\
        &+ \mathfrak{B}_1 (\varphih-\varphi,\sigmah-\sigma,\zh-z) \cdot (\varphih-\varphi,\sigmah-\sigma,\zh-z), 
    \end{split}\label{eq:B1}\\
    B_2 = & \left[ S_{,\varphi}(\varphi,z) \Phih + S_{,z}(\varphi,z) \muh + \mathfrak{B}_2 (\varphih-\varphi,\zh-z) \cdot (\varphih-\varphi,\zh-z) \right] \chi_2,\label{eq:B2}\\
    \begin{split}
        \C_1 =&\B_{,\varphi}(\varphi,z)\epsilonu\Phih +  \B(\varphi,z)\epsilonwh +  \B_{,z}(\varphi,z)\epsilonu \muh\\
        &+\overline{\C}_1 (\varphih-\varphi, \epsilonuh-\epsilonu,\zh-z) \cdot(\varphih-\varphi, \epsilonuh-\epsilonu,\zh-z),
    \end{split}\label{eq:C2}\\
    D_1 =& - \left[(\beta'(z) + \pi'(z))\muh + \DD_1 (\zh-z)^2\right],\label{eq:D1}\\
    \begin{split}
        D_2 =& -\Big[\Psi_{,\varphi}(\varphi,\epsilonu)\Phih + \Psi_{,\varepsilon}(\varphi,\epsilonu):\epsilonwh\\
    &+ \DD_2 (\varphih-\varphi, \epsilonuh-\epsilonu) \cdot (\varphih-\varphi, \epsilonuh-\epsilonu)\Big].
    \end{split}\label{eq:D2}
\end{align}
Notice that the terms written in Fraktur font are the ones related to the integral of the hessian of the auxiliary function $l$, and, therefore, their dimensions change from case to case: for example, $\B_1$ is a matrix in $\RR^{3 \times 3}$, $\B_2$ a matrix in $\RR^{2\times 2}$, and $\DD_1$ is a scalar. Moreover, it is easy to check that all these terms are uniformly bounded in $L^{\infty}$ by a constant that depends on $R$ with the only exception of $\overline{\C}_1$, which is uniformly bounded in $L^{\infty}(L^6)$ because, even if $\B \in W^{2,\infty}$, the terms $\epsilonuh$, $\epsilonu$ are uniformly bounded only in this weaker norm. We will examine $\overline{\C}_1$ more closely later, addressing the estimate of $\C_1$.
We test equation \eqref{eq:phi_diff} with $\Phih$, obtaining
\begin{equation*}
    \frac{1}{2}\frac{\text{d}}{\text{d}t} \norm{\Phih}_{\Hs}^2 + \norm{\nabla \Phih}_{\Hs}^2 = \into (A_1 + A_2) \Phih \dx.
\end{equation*}
From equation \eqref{eq:A1}, 
\begin{equation*}
    |A_1| \leq C_R \left( |\Phih| + |\lambdah| + |\muh| + |\varphih-\varphi|^2 + |\sigmah-\sigma|^2 + |\zh-z|^2\right),
\end{equation*}
and from equation \eqref{eq:A2},
\begin{equation*}
    |A_2| \leq C_R |\varphih-\varphi|\,|h_1|.
\end{equation*}
Putting these elements together and using the H\"older inequality, we obtain
\begin{equation*}
    \begin{split}
        \frac{1}{2}\frac{\text{d}}{\text{d}t}&\norm{\Phih}_{\Hs}^2 + \norm{\nabla \Phih}_{\Hs}^2\\
        \leq& C_{R} \Big( \norm{\Phih}_{\Hs}+ \norm{\lambdah}_{\Hs} + \norm{\muh}_{\Hs}\Big)\norm{\Phih}_{\Hs}\\
        &+ C_{R} \Big(\norm{\varphih-\varphi}_{\Hs}\norm{\varphih-\varphi}_{L^6}+ \norm{\sigmah-\sigma}_{\Hs}\norm{\sigmah-\sigma}_{L^6} \\
         &+ \norm{\zh-z}_{\Hs}\norm{\zh-z}_{L^6} + \norm{\varphih-\varphi}_{L^6} \norm{h_1}_{\Hs}\Big)\norm{\Phih}_{L^3}.
    \end{split}
\end{equation*}
We estimate the term $\norm{\Phi}_{L^3}$ by means of the Gagliardo--Nirenberg inequality (see \Cref{lemma:special_case_gagliardo_nirenberg}), and then we apply the H\"older inequality. We have
\begin{equation}\label{eq:phi_diff_estimate}
     \begin{split}
        \frac{1}{2}&\frac{\text{d}}{\text{d}t} \norm{\Phih}_{\Hs}^2 + \norm{\nabla \Phih}_{\Hs}^2
        \leq C_R \Big(\norm{\lambdah}_{\Hs}^2 + \norm{\muh}_{\Hs}^2\\
        &+\norm{\varphih-\varphi}_{\Hs}^2\norm{\varphih-\varphi}_{L^6}^2+ \norm{\sigmah-\sigma}_{\Hs}^2\norm{\sigmah-\sigma}_{L^6}^2 + \norm{\zh-z}_{\Hs}^2\norm{\zh-z}_{L^6}^2 \Big)\\
        &+ C_R \norm{\varphih-\varphi}_{\Hs}^2\norm{h_1}_{L^6}^2+ C_{R,\delta}\norm{\Phih}_{\Hs}^2 + \delta \norm{\nabla \Phih}_{\Hs}^2
    \end{split}
\end{equation}
for a small parameter $\delta>0$. 
Testing \eqref{eq:sigma_diff} with $\lambdah$ leads to
\begin{equation*}
    \frac{1}{2}\frac{\text{d}}{\text{d}t}\norm{\lambdah}_{\Hs}^2 + \norm{\nabla \lambdah}_{\Hs}^2 \leq \frac{1}{2}\frac{\text{d}}{\text{d}t}\norm{\lambdah}_{\Hs} + \norm{\nabla \lambdah}_{\Hs}^2 + \norm{\lambdah}_{L^2_{\Gamma}}^2 = \into (B_1 + B_2 + B_3) \lambdah \dx. 
\end{equation*}
Proceeding as before, from equations \eqref{eq:B1} and \eqref{eq:B2} we get
\begin{equation*}
    |B_1| + |B_2| \leq C_R \left(|\Phih| + |\lambdah| + |\muh| + |\varphih-\varphi|^2 + |\sigmah-\sigma|^2 + |\zh-z|^2\right), 
\end{equation*}
where we have also employed the fact that $\norm{\chi_2}_{L^{\infty}} \leq R$. From equation \eqref{eq:B3} we derive
\begin{equation*}
    |B_3| \leq C\left(|\varphih-\varphi|+ |\zh-z|\right) |h_2|.
\end{equation*}
Consequentially, through the H\"older and the Young inequalities, then the Gagliardo--Nirenberg inequality, and again the Young inequality with a small positive parameter $\delta$, we have 
\begin{equation}\label{eq:lambda_diff_estimate}
    \begin{split}
        \frac{1}{2}&\frac{\text{d}}{\text{d}t}\norm{\lambdah}_{\Hs}^2 + \norm{\nabla \lambdah}_{\Hs}^2 \leq C_R \Big( \norm{\Phih}_{\Hs}^2 + \norm{\muh}_{\Hs}^2\\
        &+\norm{\varphih-\varphi}_{\Hs}^2\norm{\varphih-\varphi}_{L^6}^2+ \norm{\sigmah-\sigma}_{\Hs}^2\norm{\sigmah-\sigma}_{L^6}^2 + \norm{\zh-z}_{\Hs}^2\norm{\zh-z}_{L^6}^2 \Big)\\
        &+ C_R \left(\norm{\varphih-\varphi}_{L^6}^2 
        + \norm{\zh-z}_{L^6}^2 \right)\norm{h_2}_{\Hs}^2+ C_{R,\delta}\norm{\lambdah}_{\Hs}^2 + \delta \norm{\nabla \lambdah}_{\Hs}^2.
    \end{split}
\end{equation}
We test equation \eqref{eq:u_diff} with $\partial_t\wwh$, obtaining:
\begin{equation*}
    \begin{split}
        C_{\A} \norm{\epsilonwth}_{\Hs}^2 &\leq \into \A\epsilonwth : \epsilonwth \dx= -\into \C_1 :\epsilonwth \dx.
    \end{split}
\end{equation*}
Our goal is to perform suitable estimates of the right-hand side of this inequality. 
By equation \eqref{eq:C2}, 
\begin{equation*}
    \begin{split}
        |\C_1| \leq & C |\epsilonu|\left( |\Phih| + |\muh|\right) +  C|\epsilonwh|\\
        &+ |\overline{\C}_1 (\varphih-\varphi,\epsilonuh-\epsilonu,\zh-z) \cdot (\varphih-\varphi,\epsilonuh-\epsilonu,\zh-z)|.
    \end{split}
\end{equation*}
Let us analyze the last term on the right-hand side. We define $\yyh = (\varphih,\epsilonuh, \zh)$, $\yy = (\varphi,\epsilonu, z)$, the function 
\begin{equation*}
    L(\yy) = \B(\varphi,z)\epsilonu,
\end{equation*}
and the associated 
\begin{equation*}
    l(s)=L\big(s\yyh +(1-s)\yy\big)=\B\big(s\varphih+(1-s)\varphi,s\,\zh +(1-s)\big)\left[s\,\epsilonuh + (1-s)\epsilonu\right].
\end{equation*}
As done before, 
\begin{equation*}
    \overline{\C}_1 = \int_0^1 D^2L\big(s\yyh +(1-s)\yy\big)(1-s) \ds.
\end{equation*}
 Notice that $L$ is linear in $\epsilonu$, so the related second derivative vanishes. Thus, the quadratic term in $\epsilonuh-\epsilonu$ will not appear in $\C_1$.  Explicitly,
\begin{equation*}
   \begin{split}
        |\overline{\C}_1 (\yyh-\yy)\cdot(\yyh-\yy) | \leq &C \left(|\epsilonuh|+|\epsilonu|\right)\left[|\varphih-\varphi|^2 + |\zh-z|^2\right]\\
        &+C|\epsilonuh-\epsilonu|\left[|\varphih-\varphi| + |\zh-z|\right],
   \end{split}
\end{equation*}
because $\B$ belongs to $W^{2,\infty}$. Employing this inequality, we have
\begin{equation*}
    \begin{split}
         C_{\A} & \norm{\epsilonwth}_{\Hs}^2 \leq C \into \left( |\epsilonu|\left( |\Phih|  + |\muh|\right)|\epsilonwth|  + |\epsilonwh||\epsilonwth| \right)\dx\\
         &+C \into \left(|\epsilonuh-\epsilonu|\right)\left(|\varphih-\varphi| + |\zh - z|\right)|\epsilonwth|\dx\\
          &+C \into \left(|\epsilonuh|+|\epsilonu|\right)\left(|\varphih-\varphi|^2 + |\zh - z|^2 \right)|\epsilonwth|\dx\\
         \leq & C\Big[\norm{\epsilonu}_{L^6}\left(\norm{\Phih}_{L^3}+\norm{\muh}_{L^3}\right) + \norm{\epsilonwh}_{\Hs}\\
         &+ \norm{\epsilonuh-\epsilonu}_{L^4}\left(\norm{\varphih-\varphi}_{L^4} + \norm{\zh-z}_{L^4}\right)\\
          &+ \left( \norm{\epsilonuh}_{L^6} + \norm{\epsilonu}_{L^6} \right) \left(\norm{\varphih-\varphi}_{L^6}^2 + \norm{\zh-z}_{L^6}^2 \right) \Big] \norm{\epsilonwth}_{\Hs}.
    \end{split}
\end{equation*}
We recall that by \ref{thm:wellposedness_energy_estimate} the terms $\norm{\epsilonu}_{L^6}$, $\norm{\epsilonuh}_{L^6}$ are bounded by a constant that depends on $R$. By the Young inequality and the Gagliardo--Nirenberg interpolation inequality from \Cref{lemma:special_case_gagliardo_nirenberg}, we deduce
\begin{equation}\label{eq:w_diff_estimate}
    \begin{split}
         C_{\A}& \norm{\epsilonwth}_{\Hs}^2\\
         &\leq \delta \left(\norm{\nabla \Phih}_{\Hs}^2 + \norm{\nabla \muh}_{\Hs}^2 + \norm{\epsilonwth}_{\Hs}^2 \right)+ C_{R,\delta} \Big[\norm{\Phih}_{\Hs}^2+\norm{\muh}_{\Hs}^2\\
         &\quad+ \norm{\epsilonwh}_{\Hs}^2+ \norm{\epsilonuh-\epsilonu}_{L^4}^2(\norm{\varphih-\varphi}_{L^4}^2 + \norm{\zh-z}_{L^4}^2)\\
         &\quad+ \norm{\varphih-\varphi}_{L^6}^4 + \norm{\zh-z}_{L^6}^4 \Big]
    \end{split}
\end{equation}
for a small $\delta>0$.
Testing equation \eqref{eq:z_diff} with $\muh$ leads to
\begin{equation*}
    \frac{1}{2}\frac{\text{d}}{\text{d}t} \norm{\muh}_{\Hs}^2 + \norm{\nabla \muh}_{\Hs}^2 = \into (D_1 + D_2) \muh \dx.
\end{equation*}
Regarding $D_1$, we recall that thanks to the separation property we proved for $z$, the term $\beta'(z)+\pi'(z)$ is bounded. We have
\begin{equation*}
    |D_1| \leq C \left( |\muh| + |\zh - z|^2\right).
\end{equation*}
Turning our attention to $D_2$, we get
\begin{equation*}
    |D_2| \leq C \left(|\Phih| + |\epsilonwh| + |\varphih-\varphi|^2 + |\epsilonuh-\epsilonu|^2\right).
\end{equation*}
Thus, with standard argumentation, we deduce
\begin{equation}\label{eq:mu_diff_estimate}
    \begin{split}
        \frac{1}{2}&\frac{\text{d}}{\text{d}t} \norm{\muh}_{\Hs}^2 + \norm{\nabla \muh}_{\Hs}^2\\
        &\leq  \delta \norm{\nabla \muh}_{\Hs}^2 + C_{\delta} \Big(\norm{\muh}_{\Hs}^2 + \norm{\Phih}_{\Hs}^2+ \norm{\epsilonwh}_{\Hs}^2
        + \norm{\zh-z}_{\Hs}^2\norm{\zh-z}_{L^6}^2\\
        &\quad+ \norm{\varphih-\varphi}_{\Hs}^2 \norm{\varphih-\varphi}_{L^6}^2 + \norm{\epsilonuh-\epsilonu}_{\Hs}^2 \norm{\epsilonuh-\epsilonu}_{L^4}^2\Big).
    \end{split}
\end{equation}
We sum inequalities \eqref{eq:phi_diff_estimate}--\eqref{eq:mu_diff_estimate}, integrate in time over $(0,t)$ remembering that the initial values of the unknowns are zero, and move the terms multiplied by $\delta$ to the left-hand side, fixing a parameter small enough. This way we get the following: 
\begin{equation*}
    \begin{split}
        &\norm{\Phih}_{\Hs}^2 + \norm{\lambdah}_{\Hs}^2 + \norm{\muh}_{\Hs}^2 + \intt \left(\norm{\nabla \Phih}_{\Hs}^2 + \norm{\nabla \lambdah}_{\Hs}^2 + \norm{\epsilonwth}_{\Hs}^2 + \norm{\nabla \muh}_{\Hs}^2  \right) \ds\\
        &\leq C_R \intt\Big[\norm{\Phih}_{\Hs}^2 
        + \norm{\lambdah}_{\Hs}^2 + \norm{\epsilonwh}_{\Hs}^2 + \norm{\muh}_{\Hs}^2\\
        &\quad+\norm{\varphih-\varphi}_{\Hs}^2\norm{\varphih-\varphi}_{L^6}^2+ \norm{\sigmah-\sigma}_{\Hs}^2\norm{\sigmah-\sigma}_{L^6}^2 + \norm{\zh-z}_{\Hs}^2\norm{\zh-z}_{L^6}^2 \\
        &\quad+ \norm{\varphih-\varphi}_{\Hs}^2\norm{h_1}_{L^6}^2 + \left(\norm{\varphih-\varphi}_{L^6}^2 
        + \norm{\zh-z}_{L^6}^2 \right)\norm{h_2}_{\Hs}^2\\
        &\quad+ \norm{\epsilonuh-\epsilonu}_{L^4}^2(\norm{\varphih-\varphi}_{L^4}^2 + \norm{\epsilonuh-\epsilonu}_{\Hs}^2 + \norm{\zh-z}_{L^4}^2)\\
        &\quad+ \norm{\varphih-\varphi}_{L^6}^4 + \norm{\zh-z}_{L^6}^4 \Big] \ds.
    \end{split}
\end{equation*}
We recall that
\begin{equation*}
    \norm{\epsilonwh}_{\Hs}^2 \leq C \ints \norm{\epsilonwth}_{\Hs}^2 \dtau,
\end{equation*}
and that, thanks to \ref{thm:wellposedness_continuous_dependence} in \Cref{thm:wellposedness}, it holds
\begin{multline*}
    \norm{\varphih-\varphi}_{L^{\infty}(\Hs)} +  \norm{\sigmah - \sigma}_{L^{\infty}(\Hs)} +\norm{\epsilonuh-\epsilonu}_{L^{\infty}(\Hs)}\\
    + \norm{\zh-z}_{L^{\infty}(\Hs)}  \leq C_R \norm{\hh}_{L^2(Q)},
\end{multline*}
and that, by \eqref{eq:add_continuous_estimate_inf4} in \Cref{remark:add_continuous_estimate}, we have
\begin{equation*}
    \norm{\varphih-\varphi}_{L^{\infty}(L^4)} + \norm{\epsilonuh-\epsilonu}_{L^{\infty}(L^4)} +  \norm{\zh-z}_{L^{\infty}(L^4)}\leq C_R \norm{\hh}_{L^2(Q)}^{\frac{1}{4}}.
\end{equation*}
Finally, we observe that
\begin{equation*}
    \begin{split}
        \intt &\norm{\varphih-\varphi}_{L^6}^4 \ds \leq \intt \norm{\varphih-\varphi}_{L^{\infty}(\Omega)}^2 \norm{\varphih-\varphi}_{L^3}^2 \ds\\
        & \leq \norm{\varphih-\varphi}_{L^4(L^{\infty})}^2 \norm{\varphih-\varphi}_{L^4(L^3)}^2 \leq C_R \norm{\hh}_{L^2(Q)}^{\frac{1}{2}} \norm{\hh}_{L^2(Q)}^2 = C_R \norm{\hh}_{L^2(Q)}^{\frac{5}{2}}
    \end{split}
\end{equation*}
where we have applied the H\"older inequality and, in the last passage, we have combined \eqref{eq:add_continuous_estimate_4inf} and \eqref{eq:add_continuous_estimate_43} from \Cref{remark:add_continuous_estimate}. The same inequality holds for $\zh-z$. Thus, we obtain
\begin{equation*}
    \begin{split}
        &\norm{\Phih}_{\Hs}^2 + \norm{\lambdah}_{\Hs}^2 + \norm{\muh}_{\Hs}^2 + \intt \left(\norm{\nabla \Phih}_{\Hs}^2 + \norm{\nabla \lambdah}_{\Hs}^2 + \norm{\epsilonwth}_{\Hs}^2 + \norm{\nabla \muh}_{\Hs}^2  \right) \ds\\
        &\leq C_R \bigg\{ \intt\Big[\norm{\Phih}_{\Hs}^2 
        + \norm{\lambdah}_{\Hs}^2 + \norm{\muh}_{\Hs}^2\Big] \ds + \intt \ints \norm{\epsilonwth}_{\Hs}^2 \dtau \ds \\
        &\quad+  \norm{\hh}_{L^2(Q)}^2 \intt \left[\norm{\varphih-\varphi}_{L^6}^2+ \norm{\sigmah-\sigma}_{L^6}^2 + \norm{\zh-z}_{L^6}^2 \right] \ds \\
        &\quad+ \norm{\hh}_{L^2(Q)}^2\norm{h_1}_{L^{2}(L^6)}^2 + \norm{h_2}_{L^{\infty}(\Hs)}^2 \intt \left[\norm{\varphih-\varphi}_{L^6}^2 +
        \norm{\zh-z}_{L^6}^2 \right] \ds \\
        &\quad +\norm{\hh}_{L^2(Q)}^{\frac{1}{2}} \intt \left[\norm{\varphih-\varphi}_{L^4}^2+ \norm{\zh-z}_{L^4}^2 \right] \ds  + \norm{\hh}_{L^2(Q)}^4 + \norm{\hh}_{L^2(Q)}^{\frac{5}{2}}\bigg\}.
    \end{split}
\end{equation*}
Again, we recall that from \ref{thm:wellposedness_continuous_dependence} in \Cref{thm:wellposedness} we know
\begin{equation*}
    \begin{split}
        \norm{\varphih-\varphi}_{L^{2}(\Vs)} +  \norm{\sigmah - \sigma}_{L^{2}(\Vs)} + \norm{\zh-z}_{L^{2}(\Vs)}  \leq C_R \norm{\hh}_{L^2(Q)},
    \end{split}
\end{equation*}
and that $\Vs \hookrightarrow L^4, L^6$. Finally, we obtain:
\begin{equation*}
    \begin{split}
        \norm{\Phih}_{\Hs}^2 &+ \norm{\lambdah}_{\Hs}^2 + \norm{\muh}_{\Hs}^2 + \intt \left(\norm{\nabla \Phih}_{\Hs}^2 + \norm{\nabla \lambdah}_{\Hs}^2 + \norm{\epsilonwth}_{\Hs}^2 + \norm{\nabla \muh}_{\Hs}^2  \right) \ds\\
        &\leq C_R \bigg\{ \intt\Big[\norm{\Phih}_{\Hs}^2 
        + \norm{\lambdah}_{\Hs}^2 + \norm{\muh}_{\Hs}^2\Big] \ds + \intt \ints \norm{\epsilonwth}_{\Hs}^2 \dtau \ds \\
        &\quad+  \norm{\hh}_{L^2(Q)}^4 + \norm{\hh}_{L^2(Q)}^2 \left(\norm{h_1}_{L^2(\Vs)}^2 +  \norm{h_2}_{L^{\infty}(\Hs)}^2\right) + \norm{\hh}_{L^2(Q)}^{\frac{5}{2}}\bigg\}.
    \end{split}
\end{equation*}
By means of the Gronwall inequality, we have:
\begin{equation*}
        \begin{split}
            \norm{\Phih}_{\Hs}^2 &+ \norm{\lambdah}_{\Hs}^2 + \norm{\muh}_{\Hs}^2 + \intt \left(\norm{\nabla \Phih}_{\Hs}^2 + \norm{\nabla \lambdah}_{\Hs}^2 + \norm{\epsilonwth}_{\Hs}^2 + \norm{\nabla \muh}_{\Hs}^2  \right) \ds\\
            & \leq C_R \left( \norm{\hh}_{L^2(Q)}^4 + \norm{\hh}_{L^2(Q)}^2 \left(\norm{h_1}_{L^2(\Vs)}^2 +  \norm{h_2}_{L^{\infty}(\Hs)}^2\right) + \norm{\hh}_{L^2(Q)}^{\frac{5}{2}}\right) ,
        \end{split}
\end{equation*}
from which \eqref{eq:diff_thesis} follows. Therefore, the proof of \Cref{thm:diff_solution_op} is complete. 
\end{proof}

\noindent From this result, it follows that the reduced cost functional $\J$ is Fréchet differentiable over the set $\Ur$. Since $\Uad$ is a closed and convex subset of $\U$, we can prove the following result. 

\begin{cor}\label{cor:necessary_condition_partial}
    Let $\cchi^* \in \Uad$ be an optimal control for the control problem with the associated state $\S(\cchi^*) = (\varphi^*,\sigma^*,\uu^*,z^*)$. Then, the following inequality is satisfied 
    \begin{equation}\label{eq:necessary_condition_partial}
        \begin{split}
            \alpha_1& \intTo (\varphi^* -\varphi_Q) \xi \dxt + \alpha_2 \into (\varphi^*(T) -\varphi_{\Omega}) \xi(T) \dx + \alpha_3 \into \xi(T) \dx\\
            &+ \alpha_4 \intTo (\sigma^* -\sigma_Q) \rho \dxt + \alpha_5 \into (\sigma^*(T) -\sigma_{\Omega}) \rho(T) \dx\\
            &+ \alpha_6 \intTo \left[ \frac{1}{2} \gamma'(\varphi^*) \varepsilon(\uu^*): \varepsilon(\uu^*) \xi + \gamma(\varphi^*)\varepsilon(\uu^*) : \epsilono \right] \dxt
            \\
            &+ \alpha_7\intTo (z^* -z_Q) \zeta \dxt + \alpha_8 \into z^*(T) \dx\\
            &+ \alpha_9 \intTo \cchi \cdot (\cchi - \cchi^*) \dxt \geq 0 
        \end{split}
    \end{equation}
    for every $\cchi \in \Uad$,
    where $(\xi,\rho,\oomega,\zeta)$ is the unique solution to the linearized system in $\cchi^*$ for $\hh = \cchi - \cchi^*$.
\end{cor}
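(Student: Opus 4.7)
The plan is the standard variational argument for minimizers over a convex constraint. Since $\Uad$ is convex, for every $\cchi \in \Uad$ and every $t \in (0,1]$ the combination $\cchi^* + t(\cchi - \cchi^*)$ again belongs to $\Uad$. Optimality of $\cchi^*$ therefore yields
\begin{equation*}
    \frac{\J(\cchi^* + t(\cchi - \cchi^*)) - \J(\cchi^*)}{t} \geq 0
\end{equation*}
for all such $t$. Letting $t \to 0^+$ and exploiting the Fréchet differentiability of $\J$ at $\cchi^*$ produces the variational inequality $\duality{D\J(\cchi^*)}{\cchi - \cchi^*}_{\U} \geq 0$.

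The Fréchet differentiability of $\J$ follows from \Cref{thm:diff_solution_op} via the chain rule applied to the composition $\J(\cchi) = \mathcal{J}(\S(\cchi),\cchi)$:
\begin{equation*}
    D\J(\cchi^*)\hh = D_{(\varphi,\sigma,\uu,z)}\mathcal{J}(\S(\cchi^*),\cchi^*)\bigl[(\xih,\rhoh,\oomegah,\zetah)\bigr] + D_{\cchi}\mathcal{J}(\S(\cchi^*),\cchi^*)\hh,
\end{equation*}
where I have already used the identification $D\S(\cchi^*)\hh = (\xih,\rhoh,\oomegah,\zetah)$ from \Cref{thm:diff_solution_op}. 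Specializing to $\hh = \cchi - \cchi^*$, uniqueness in \Cref{prop:existence_lin} lets me identify this quadruplet with the $(\xi,\rho,\oomega,\zeta)$ appearing in the statement.

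It then suffices to compute the two partial derivatives of $\mathcal{J}$ term by term from \eqref{eq:cost_functional}: the quadratic tracking addends and the terminal $L^2$-addends produce the expected linear pairings $\alpha_1(\varphi^*-\varphi_Q)\xi$, $\alpha_2(\varphi^*(T)-\varphi_{\Omega})\xi(T)$, and their analogues for $\sigma$ and $z$; the terminal $L^1$-addends yield $\alpha_3\into \xi(T) \dx$ and $\alpha_8\into \zeta(T) \dx$; the sixth addend, rewritten as $\tfrac{1}{2}\intTo \gamma(\varphi^*)\,\varepsilon(\uu^*):\varepsilon(\uu^*) \dxt$, splits by the chain rule into a $\varphi$-variation $\tfrac{1}{2}\gamma'(\varphi^*)\varepsilon(\uu^*):\varepsilon(\uu^*)\,\xi$ and a $\uu$-variation $\gamma(\varphi^*)\varepsilon(\uu^*):\varepsilon(\oomega)$; the quadratic regularization contributes $\alpha_9\,\cchi^* \cdot (\cchi-\cchi^*)$. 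Collecting these contributions and substituting into the variational inequality reproduces \eqref{eq:necessary_condition_partial}.

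The only point requiring some care is the Fréchet differentiability of the $\gamma$-dependent contribution in the $\V \times \U$ topology, which is the only genuinely nonquadratic term in $\mathcal{J}$. This follows from hypothesis \ref{hyp:n}, which ensures $C^1$-regularity and uniform bounds on $\gamma$ and $\partial_{\varphi}\gamma$ for $\varphi \in [0,N]$, combined with the a.e.\ convergence of perturbed states provided by the continuous dependence estimate of \Cref{thm:wellposedness}, so that the difference quotients converge by dominated convergence. All remaining contributions are smooth quadratic or linear functionals, and no serious obstacle arises.
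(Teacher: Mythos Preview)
Your proposal is correct and follows essentially the same route as the paper: both use the convexity of $\Uad$ together with the optimality of $\cchi^*$ to derive the variational inequality $D\J(\cchi^*)[\cchi-\cchi^*]\ge 0$, and both compute $D\J$ via the chain rule, invoking \Cref{thm:diff_solution_op} to identify $D\S(\cchi^*)\hh$ with the linearized solution. Your discussion of the Fr\'echet differentiability of the $\gamma$-term is in fact more explicit than the paper's, which simply asserts differentiability of $\mathcal J$ on the relevant space; note also that your computed contributions $\alpha_8\int_\Omega \zeta(T)\,\mathrm{d}x$ and $\alpha_9\,\cchi^*\cdot(\cchi-\cchi^*)$ agree with what appears in the paper's proof (the displayed statement of the corollary contains two evident typos there).
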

\begin{proof}
First of all, we recall that the cost functional $\mathcal{J}$ is well-defined over the space
\begin{multline*}
    \Big[C^0([0,T];\Hs) \times C^0([0,T];\Hs) \times L^2(0,T;\Vs) \times C^0([0,T];\Hs) \Big]\\ \times \Big[L^2(0,T;\Hs) \times L^2(0,T;\Hs)\Big],
\end{multline*}
where it is also Fréchet differentiable. 
Moreover, from \Cref{thm:diff_solution_op}, the solution operator $\S:\U \to \V$ is Fréchet differentiable in $\Ur$. Since, from standard embedding results, $\V$ is continuously embedded in 
\begin{equation*}
    C^0([0,T];\Hs) \times C^0([0,T];\Hs) \times L^2(0,T;\Vs) \times C^0([0,T];\Hs),
\end{equation*}
 $\S$ is as well Fréchet differentiable in $\Ur$ if it is seen as a functional between $\U$ and this larger space.  Thus, the reduced cost functional $\J$ is Fréchet differentiable in $\Ur$ and, by the chain rule,
\begin{equation*}
    \begin{split}
        D\J(\cchi^*)&[\cchi-\cchi^*] =  \alpha_1 \intTo (\varphi^* -\varphi_Q) \xi \dxt + \alpha_2 \into (\varphi^*(T) -\varphi_{\Omega}) \xi(T) \dx\\
            & + \alpha_3 \into \xi(T) \dx + \alpha_4 \intTo (\sigma^* -\sigma_Q) \rho \dxt + \alpha_5 \into (\sigma^*(T) -\sigma_{\Omega}) \rho(T) \dx\\
            &+ \alpha_6 \intTo \left[\frac{1}{2} \gamma'(\varphi^*) \varepsilon(\uu^*): \varepsilon(\uu^*) \xi + \gamma(\varphi^*)\varepsilon(\uu^*) : \epsilono \right] \dxt
            \\
            &+ \alpha_7\intTo (z^* -z_Q) \zeta \dxt + \alpha_8 \into \zeta(T) \dx + \alpha_9 \intTo \cchi \cdot (\cchi - \cchi^*) \dxt,
    \end{split}
\end{equation*}
where $\cchi^*$ a the optimal control, $\cchi$ is any admissible control, and $(\xi,\rho,\oomega,\zeta)$ is the solution to the system linearized in  $\cchi^*$ with $\hh = \cchi - \cchi^*$.
From the optimality of $\cchi^*$ and the convexity of $\Uad$, we obtain the trivial inequality
\begin{equation*}
        \J(\cchi^*+t(\cchi-\cchi^*)) - \J(\cchi^*) \geq 0
\end{equation*}
for every $t \in (0,1)$. Dividing by $t$ and passing to the limit as $t \to 0^+$ leads to
\begin{equation*}
   \begin{split}
        D\J(\cchi^*)[\cchi-\cchi^*] \geq 0.
   \end{split}
\end{equation*}
\end{proof}

\noindent The next step is simplifying the expression \eqref{eq:necessary_condition_partial}, removing the linearized variables. In fact, even if the inequality just derived represents a first-order necessary condition for optimality, it does not provide a practically efficient characterization. Specifically, for every element $\cchi$ of the admissible control space $\Uad$, it requires solving the corresponding linearized system for $\hh = \cchi - \cchi^*$. This approach is computationally demanding, particularly in high-dimensional control spaces, highlighting the need for a reformulation. To this end, we need to introduce the adjoint system.  

\section{The adjoint system and first-order necessary optimality conditions}\label{section:adj_system}

The adjoint system associated with an optimal control $\cchi^* \in \Uad$ and its corresponding solution to the state system $(\varphi^*,\sigma^*,\uu^*,z^*) = \S(\cchi^*)$ is given by
\begin{subequations}\label{eq:adjoint_problem}
    \begin{align}
        & \begin{aligned}
            &-\partial_t\q - \Delta \q =  a_1 \q + b_1 \r + d_1 \s + \cc_1 : \epsilonv \\
            &\phantom{-\partial_t\q - \Delta \q = } + \alpha_1(\varphi^*-\varphi_Q) + \frac{\alpha_6}{2} \gamma'(\varphi^*) \varepsilon(\uu^*): \varepsilon(\uu^*),
        \end{aligned} \label{eq:adj_phi}\\
        & -\partial_t\r - \Delta \r = a_2\q + b_2 \r + \alpha_4(\sigma^*-\sigma_Q),\label{eq:adj_lactate}\\
        &  -\diver\left[- \A \epsilonvt + \B(\varphi^*,z^*)\epsilonv  \right] =  -\diver \left[ \dd_2 \s + \alpha_6\gamma(\varphi^*)\varepsilon(\uu^*)\right] , \label{eq:adj_displacement}\\
        & -\partial_t \s - \Delta \s =  a_3 \q + b_3 \r + d_3  \s +     \cc_2 : \epsilonv + \alpha_7(z^*-z_Q), \label{eq:adj_damage}
    \end{align}
\end{subequations}
coupled with the boundary conditions
\begin{equation}\label{eq:adjoint_boundary_conditions}
    \partial_{\nnu} \q = 0, \quad \partial_{\nnu} \r = - \r, \quad \vv = 0, \quad \partial_{\nnu} \s = 0,
\end{equation}
and with the final conditions
\begin{equation}\label{eq:adjoint_final_conditions}
    \q(T) = \alpha_2 (\varphi^*(T) - \varphi_{\Omega}) + \alpha_3, \quad \r(T) = \alpha_5 (\sigma^*(T)-\sigma_{\Omega}), \quad \vv(T) = 0, \quad \s(T) = \alpha_8.
\end{equation}

\begin{prop} \label{prop:existence_adj}
     Let $\cchi^*$ be an optimal control with the associated solution to the state system $(\varphi^*,\sigma^*,\uu^*,z^*) = \S(\cchi^*)$. The adjoint system \eqref{eq:adjoint_problem}--\eqref{eq:adjoint_final_conditions} has a unique weak solution $(\q, \r, \vv, \s)$ in the sense that
    \begin{gather*}
            \q \in   H^1(0,T;\Hs) \cap L^{\infty}(0,T; \Vs) \cap L^{2}(0,T;\Ws),\\
            \r \in H^1(0,T;\Vs') \cap L^{\infty}(0,T;\Hs) \cap L^2(0,T;\Vs), \\
            \vv \in W^{1,\infty}(0,T;\Vs_0),\\
            \s \in   H^1(0,T;\Hs) \cap L^{\infty}(0,T; \Vs) \cap L^{2}(0,T;\Ws) 
    \end{gather*}
    with 
   \begin{gather*}
        \q(T) = \alpha_2 (\varphi^*(T) - \varphi_{\Omega}) + \alpha_3, \quad \r(T) = \alpha_5 (\sigma^*(T)-\sigma_{\Omega}), \quad \vv(T) = 0, \quad \s(T) = \alpha_8,
   \end{gather*}
   such that
    \begin{subequations}\label{eq:problem_in_eq_with_spaces}
        \begin{align}
        &\begin{aligned}
            & \into \left(-\partial_t\q \eta +  \nabla \q \cdot \nabla \eta \right)\dx = \into \left[ a_1 \q + b_1 \r + d_1 \s + \cc_1 : \epsilonv\right]\eta \dx\\
        & \phantom{\into \left(-\partial_t\q \eta +  \nabla \q \cdot \nabla \eta \right)\dx =}+\into \left[\alpha_1(\varphi^*-\varphi_Q) + \frac{\alpha_6}{2}\gamma'(\varphi^*)\varepsilon(\uu^*):\varepsilon(\uu^*)\right] \eta \dx, \\
        \end{aligned}\label{eq:phi_adj_with_spaces}\\
        &\begin{aligned}
            &-\duality{\partial_t\r}{\eta }_{\Vs} + \into \nabla \r \cdot \nabla \eta \dx +  \intg \r \, \eta \dA \\
          & \phantom{ -\duality{\partial_t\r}{\eta }_{\Vs} + \into \nabla \r \cdot \nabla \eta \dx + } = \into \left[a_2\q + b_2 \r + \alpha_4(\sigma^*-\sigma_Q)\right] \eta \dx,
        \end{aligned} \label{eq:sigma_adj_with_spaces}\\
        &  \into  \left[-\A \epsilonvt + \B(\varphi^*,z^*)\epsilonv  \right] : \varepsilon(\ttheta) \dx = \into  \left[ \dd_2 \s + \alpha_6\gamma(\varphi^*)\varepsilon(\uu^*)\right] : \varepsilon(\ttheta) \dx,\label{eq:u_adj_with_spaces}\\
        & \into \left(-\partial_t \s \eta + \nabla \s \cdot \nabla \eta \right)\dx = \into \left[  a_3 \q + b_3 \r + d_3  \s +     \cc_2 : \epsilonv + \alpha_7(z^*-z_Q)\right] \eta \dx,\label{eq:z_adj_with_spaces}
    \end{align}
\end{subequations}
a.e. in $(0,T)$, for every $\eta \in \Vs$ and $\ttheta \in \Vs_0$.
\end{prop}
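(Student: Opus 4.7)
The strategy is to reduce the adjoint system to a forward-in-time problem via the time-reversal substitution $\tilde{\q}(t) \coloneqq \q(T-t)$, and analogously $\tilde{\r}(t) \coloneqq \r(T-t)$, $\tilde{\vv}(t) \coloneqq \vv(T-t)$, $\tilde{\s}(t) \coloneqq \s(T-t)$, and then to solve it by a Faedo--Galerkin scheme analogous to the one of \Cref{prop:existence_lin}. After composing the coefficients $a_i, b_i, d_i, \cc_i, \dd_2$ with $t \mapsto T-t$ as well, the four equations take the form of a forward linear PDE system with initial conditions prescribed by \eqref{eq:adjoint_final_conditions} and source terms which are completely determined by the optimal state $(\varphi^*,\sigma^*,\uu^*,z^*)$, the target functions, and the constants $\alpha_i$. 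A crucial observation is that the sign flip in front of $\A \epsilonvt$ in \eqref{eq:adj_displacement} is exactly what produces the viscoelastic structure $+\A \varepsilon(\partial_t \tilde{\vv}) + \B(\varphi^*, z^*) \varepsilon(\tilde{\vv})$ on the left-hand side of the transformed displacement equation, which is of the same type as \eqref{eq:lin_displacement}. Hence, the transformed system has the very same analytic structure as the linearized system \eqref{eq:linearized_problem}--\eqref{eq:linearized_initial_cond}.

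Next I would set up a Galerkin approximation based on eigenfunctions of the Laplacian with the appropriate boundary conditions (homogeneous Neumann for the $\tilde{\q}$- and $\tilde{\s}$-components, Robin for $\tilde{\r}$, homogeneous Dirichlet for the components of $\tilde{\vv}$). Existence and uniqueness at the finite-dimensional level are guaranteed by the linearity of the system together with classical ODE theory, since \Cref{remark:boundedness_coeff_lin} combined with the separation property of \Cref{prop:separation_property} ensures $a_i, b_i, d_1, d_3, \dd_2 \in L^{\infty}(Q)$ and $\cc_1, \cc_2 \in L^{\infty}(0,T; L^6)$; the source terms coming from the cost functional are moreover admissible by \Cref{thm:wellposedness} and \ref{hyp:target_functions}. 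The key a priori estimates would then be obtained by testing the $\tilde{\q}$-, $\tilde{\r}$-, and $\tilde{\s}$-equations against themselves and the displacement equation against $\partial_t \tilde{\vv}$, summing and integrating in time. The calculations follow those in the proof of \Cref{prop:existence_lin} almost verbatim, relying on H\"older, Young, the Gagliardo--Nirenberg inequality of \Cref{lemma:special_case_gagliardo_nirenberg}, and the bound $\norm{\varepsilon(\tilde{\vv})}_{\Hs}^2 \leq C \int_0^t \norm{\varepsilon(\partial_s \tilde{\vv})}_{\Hs}^2 \ds$ coming from $\tilde{\vv}(0) = \mathbf{0}$. Gronwall's inequality then closes the estimates and, after passing to the limit via weak-$\ast$ compactness, a solution at the level of \eqref{eq:problem_in_eq_with_spaces} is obtained; the extra regularity $L^2(0,T; \Ws)$ for $\tilde{\q}, \tilde{\s}$ and $W^{1,\infty}(0,T; \Wzs)$ for $\tilde{\vv}$ is subsequently recovered from standard elliptic regularity applied to the stationary equations satisfied by these unknowns a.e. in time.

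The main technical obstacle, exactly as in \Cref{prop:existence_lin}, is the cross-coupling between the scalar equations and the displacement: the coefficients $\cc_1, \cc_2$ sit only in $L^{\infty}(0,T; L^6)$, so the terms $\cc_1 : \varepsilon(\tilde{\vv})$ and $\cc_2 : \varepsilon(\tilde{\vv})$ appearing as sources for $\tilde{\q}$ and $\tilde{\s}$ must be controlled via the interpolation $\norm{\cdot}_{L^3} \leq C \norm{\cdot}_{\Hs}^{1/2} \norm{\cdot}_{\Vs}^{1/2}$, and the resulting gradient contribution absorbed with a small parameter, mirroring the estimate that leads to \eqref{eq:w_diff_estimate}. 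The coupling to the displacement itself is then closed by the Gronwall argument thanks to $\tilde{\vv}(0) = \mathbf{0}$. Uniqueness is immediate from linearity: the difference of two solutions satisfies the same homogeneous system with zero initial data, so the same energy estimate forces it to vanish. Reverting the time substitution finally yields the solution $(\q, \r, \vv, \s)$ announced in the statement.
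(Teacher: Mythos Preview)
Your proposal is correct and follows exactly the approach the paper indicates: the paper omits the proof entirely, stating only that ``the well-posedness of the adjoint system can be proved as in \Cref{prop:existence_lin}''. Your time-reversal reduction, Galerkin scheme, and energy estimates are precisely the natural way to carry this out, and you correctly identify the handling of the $\cc_1,\cc_2$ terms (which now sit in the $\q$- and $\s$-equations rather than in the displacement equation) as the only point requiring care; the minor overreach in claiming $W^{1,\infty}(0,T;\Wzs)$ for $\tilde{\vv}$ is harmless since the statement asks only for $W^{1,\infty}(0,T;\Vs_0)$.
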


\noindent The well-posedness of the adjoint system can be proved as in \Cref{prop:existence_lin}, thus we will omit the proof.

\begin{thm}[First-order necessary conditions of optimality]
    Let $\cchi^* \in \Uad$ be an optimal control for the control problem with the associated state $\S(\cchi^*) = (\varphi^*,\sigma^*,\uu^*,z^*)$. Then, the following inequality is satisfied 
     \begin{equation}\label{eq:necessary_condition_2}
         \begin{split}
             &-\intTo \varphi^*\Big(1-\frac{\varphi^*}{N}\Big) (\chi_1-\chi_1^*) \q \dxt\\
             &\quad+ \intTo S(\varphi^*,z^*) (\chi_2 -\chi_2^*) \r \dxt
             + \alpha_9 \intTo \cchi \cdot (\cchi - \cchi^*) \dxt \geq 0 
         \end{split}
    \end{equation}
    for every $\cchi \in \Uad$,
    where $\q$ and $\r$ are respectively the solution's first and third components to the adjoint system associated with $\cchi^*$.
\end{thm}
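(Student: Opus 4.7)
The plan is to start from the necessary condition \eqref{eq:necessary_condition_partial} obtained in \Cref{cor:necessary_condition_partial} and eliminate the linearized variables $(\xi,\rho,\oomega,\zeta)$ by means of the duality between the linearized system \eqref{eq:linearized_problem}--\eqref{eq:linearized_initial_cond} (with $\hh = \cchi - \cchi^*$) and the adjoint system \eqref{eq:adjoint_problem}--\eqref{eq:adjoint_final_conditions}. The key device is to test each adjoint equation against the corresponding linearized variable, and each linearized equation against the corresponding adjoint variable, and then to take the difference and integrate in time from $0$ to $T$. All the required regularity is available: $\xi,\rho,\zeta$ vanish at $t=0$, $\vv$ vanishes at $t=T$, $\oomega(t),\vv(t)\in\Vs_0$ for every $t$, and by \Cref{prop:existence_lin} and \Cref{prop:existence_adj} every time integral is well-defined.

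Concretely, I pair \eqref{eq:lin_phi}--\eqref{eq:phi_adj_with_spaces} (test $\xi$ against $\q$), \eqref{eq:lin_lactate}--\eqref{eq:sigma_adj_with_spaces} (test $\rho$ against $\r$), and \eqref{eq:lin_damage}--\eqref{eq:z_adj_with_spaces} (test $\zeta$ against $\s$). Using the initial conditions $\xi(0)=\rho(0)=\zeta(0)=0$ and the identities $\int_0^T(\langle \partial_t\xi,\q\rangle + \langle \partial_t\q,\xi\rangle)\dt = \int_\Omega \xi(T)\q(T)\dx$ and analogously for $\rho,\zeta$, the diffusion terms and the Robin boundary contribution for $\rho,\r$ cancel. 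For the displacement pair, I test \eqref{eq:lin_displacement} with $\ttheta=\vv\in\Vs_0$ and \eqref{eq:u_adj_with_spaces} with $\ttheta=\oomega\in\Vs_0$: the symmetry of $\A$ yields
\begin{equation*}
\intTo \A \epsilonot : \epsilonv \dxt + \intTo \A \epsilonvt : \epsilono \dxt = \into \A\epsilono:\epsilonv \dx\, \Big|_0^T = 0
\end{equation*}
thanks to $\oomega(0)=\mathbf{0}$ and $\vv(T)=\mathbf{0}$, while the symmetric terms involving $\B(\varphi^*,z^*)$ cancel outright.

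Summing the four resulting identities, I observe the systematic cancellation of every crossed coefficient: the terms $a_1\xi\q$, $a_2\rho\q\,/\,a_2\q\rho$, $a_3\zeta\q\,/\,a_3\q\zeta$, $b_1\xi\r\,/\,b_1\r\xi$, $b_2\rho\r$, $b_3\zeta\r\,/\,b_3\r\zeta$, $d_1\xi\s\,/\,d_1\s\xi$, $d_3\zeta\s$, $\cc_1:\epsilonv\,\xi$, $\cc_2:\epsilonv\,\zeta$, and $\dd_2:\epsilono\,\s$ all pair off exactly as designed in the construction of the adjoint. What remains on the left-hand side is precisely the collection of boundary/cost terms
\begin{equation*}
\begin{split}
&\into \xi(T)\q(T)+\rho(T)\r(T)+\zeta(T)\s(T) \dx \\
&+ \intTo \Big[\alpha_1(\varphi^*-\varphi_Q)\xi + \alpha_4(\sigma^*-\sigma_Q)\rho + \alpha_7(z^*-z_Q)\zeta \\
&\hspace{6em}+\tfrac{\alpha_6}{2}\gamma'(\varphi^*)\varepsilon(\uu^*):\varepsilon(\uu^*)\,\xi + \alpha_6 \gamma(\varphi^*)\varepsilon(\uu^*):\epsilono \Big] \dxt,
\end{split}
\end{equation*}
while the right-hand side reduces to the sole control contribution
\begin{equation*}
\intTo \big[a_4 (\chi_1-\chi_1^*)\q + b_4(\chi_2-\chi_2^*)\r\big] \dxt = \intTo\!\Big[-\varphi^*\!\big(1-\tfrac{\varphi^*}{N}\big)(\chi_1-\chi_1^*)\q + S(\varphi^*,z^*)(\chi_2-\chi_2^*)\r\Big] \dxt.
\end{equation*}
Substituting the terminal conditions $\q(T)=\alpha_2(\varphi^*(T)-\varphi_\Omega)+\alpha_3$, $\r(T)=\alpha_5(\sigma^*(T)-\sigma_\Omega)$, $\s(T)=\alpha_8$ produces exactly the linearized-variable part of \eqref{eq:necessary_condition_partial}. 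Plugging this identity into \eqref{eq:necessary_condition_partial} yields \eqref{eq:necessary_condition_2}.

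The main obstacle is purely bookkeeping: one has to be careful that every coefficient $a_i,b_i,\cc_i,d_i,\dd_2$ appears with opposite signs on the two sides, which is the very reason the adjoint \eqref{eq:adjoint_problem} has the form it does (with the $\cc_1,\cc_2,\dd_2$ contributions routed through the dual elasticity equation). The only analytic subtlety, besides the symmetry argument for the $(\oomega,\vv)$ pair, is the admissibility of the test functions in the weak formulations; the regularity stated in \Cref{prop:existence_lin} and \Cref{prop:existence_adj}, together with the separation property from \Cref{prop:separation_property} (which ensures $d_3\in L^\infty(Q)$), guarantees that every integral above is well-defined and each integration by parts in time is rigorous.
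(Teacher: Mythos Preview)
Your proposal is correct and follows essentially the same approach as the paper: test the linearized equations against the adjoint variables and vice versa, exploit the symmetry of $\A$ and $\B(\varphi^*,z^*)$ together with the initial/final conditions to obtain the duality identity, then substitute it into \eqref{eq:necessary_condition_partial}. The paper condenses the calculation into a single $\frac{\mathrm{d}}{\mathrm{d}t}$ identity for $\int_\Omega(\xi\q+\rho\r+\A\epsilono:\epsilonv+\zeta\s)\dx$ before integrating, whereas you phrase it as pairwise integrations by parts in time, but the content is identical.
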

\begin{proof}
We test the equations of the linearized system with the solution of the adjoint system and subtract the equations of the adjoint system tested with the solution of the linearized system. Then, we sum up all the equations we have obtained. 
Explicitly, we test \eqref{eq:phi_lin_with_spaces} with $\q$,
\eqref{eq:sigma_lin_with_spaces} with $\r$, \eqref{eq:u_lin_with_spaces} with $\vv$ and \eqref{eq:z_lin_with_spaces} with $\s$. In the same way, we test \eqref{eq:phi_adj_with_spaces} with $\xi$, \eqref{eq:sigma_adj_with_spaces} with $\rho$, \eqref{eq:u_adj_with_spaces} with $\oomega$ and \eqref{eq:z_adj_with_spaces} with $\zeta$. Some terms cancel out and we have:
\begin{equation*}
    \begin{split}
        \frac{\text{d}}{\text{d}t}&\left[\into \left(\xi \q + \rho \r +\A\epsilono :\epsilonv +  \zeta \s\right) \dx \right]\\ 
        &= \into \left(a_4 h_1 \q + b_4 h_2 \r\right) \dx - \alpha_1 \into (\varphi^*-\varphi_{Q})\xi \dx - \alpha_4 \into (\sigma^*-\sigma_{Q})\rho \dx\\
        &\quad - \alpha_6 \into \left(\frac{1}{2} \gamma'(\varphi^*)\varepsilon(\uu^*):\varepsilon(\uu^*)\xi + \gamma(\varphi^*)\varepsilon(\uu^*):\epsilono\right) \dx - \alpha_7 \into (z^*-z_{Q})\zeta \dx.
    \end{split}
\end{equation*}
Integrating in time over the interval $(0,T)$, exploiting the initial and final  conditions, and moving some terms to the left-hand side, we finally have
\begin{equation*}
    \begin{split}
        \bigg[&\alpha_2 \into (\varphi^*(T)-\varphi_{\Omega})\xi(T) \dx + \alpha_3\into \xi(T) \dx + \alpha_5 \into (\sigma^*(T)-\sigma_{\Omega})\rho(T) \dx\\
        &\quad+ \alpha_8 \into \zeta(T) \dx \bigg] + \bigg[\alpha_1 \intTo (\varphi^*-\varphi_{Q})\xi \dxt + \alpha_4 \intTo (\sigma^*-\sigma_{Q})\rho \dxt\\
        &\quad+ \alpha_6 \intTo \left( \frac{1}{2} \gamma'(\varphi^*)\varepsilon(\uu^*):\varepsilon(\uu^*)\xi + \gamma(\varphi^*)\varepsilon(\uu^*):\epsilono \right)\dxt\\
        &\quad+ \alpha_7 \intTo (z^*-z_{Q})\zeta \dxt\bigg] = \intTo (a_4 h_1 \q + b_4 h_2 \r) \dxt.
    \end{split}
\end{equation*}
Combining this equality with the inequality stated in \Cref{cor:necessary_condition_partial}, we obtain the thesis.
\end{proof}

\section*{Acknowledgments}
G.C. wishes to acknowledge the partial financial support received from  International Research Laboratory LYSM, IRL 2019 CNRS/INdAM. G.C. is a member of GNAMPA (Gruppo Nazionale per l’Analisi Matematica, la Probabilità e le loro Applicazioni) of INdAM (Istituto Nazionale di Alta Matematica).

\printbibliography

@book{brezis2011,
	author = {Br\'{e}zis, Ha\"im},
	isbn = {978-0-387-70913-0},
	mrclass = {35-01 (46-01 46E35 46N20 47F05)},
	mrnumber = {2759829},
	mrreviewer = {Vicen\c{t}iu D. R\u{a}dulescu},
	pages = {xiv+599},
	publisher = {Springer, New York},
	series = {Universitext},
	title = {Functional analysis, {S}obolev spaces and partial differential equations},
	year = {2011}}

@article{Cherfils_etal_2022,
	author = {Cherfils, Laurence and Gatti, Stefania and Guillevin, Carole and Miranville, Alain and Guillevin, R\'emy},
	doi = {10.1093/imammb/dqac010},
	fjournal = {Mathematical Medicine and Biology. A Journal of the IMA},
	issn = {1477-8599,1477-8602},
	journal = {Math. Med. Biol.},
	mrclass = {92C30},
	mrnumber = {4517253},
	number = {4},
	pages = {382--409},
	title = {On a tumor growth model with brain lactate kinetics},
	url = {https://doi.org/10.1093/imammb/dqac010},
	volume = {39},
	year = {2022},
	Bdsk-Url-1 = {https://doi.org/10.1093/imammb/dqac010}}

@article{Garcke_Lam_Signori_Opt_Contr_2021,
	author = {Garcke, Harald and Lam, Kei Fong and Signori, Andrea},
	doi = {10.1137/20M1372093},
	fjournal = {SIAM Journal on Control and Optimization},
	issn = {0363-0129,1095-7138},
	journal = {SIAM J. Control Optim.},
	mrclass = {49J20 (35K57 35Q92 49K20 74B05 92C42)},
	mrnumber = {4243665},
	mrreviewer = {Cheng-Cheng\ Zhu},
	number = {2},
	pages = {1555--1580},
	title = {Sparse optimal control of a phase field tumor model with mechanical effects},
	url = {https://doi.org/10.1137/20M1372093},
	volume = {59},
	year = {2021},
	Bdsk-Url-1 = {https://doi.org/10.1137/20M1372093}}

@article{Guillevin_etal_2018,
    author = {Guillevin, Carole and Guillevin, Rémy and Miranville, Alain and Perrillat-Mercerot, Angélique},
    title = {Analysis of a mathematical model for brain lactate kinetics},
    journal = {Math. Biosci. Eng.},
    fjournal = {Mathematical Biosciences \& Engineering},
    volume = {15},
    number = {5},
    pages = {1225--1242},
    year = {2018},
    issn = {1551-0018},
    doi = {10.3934/mbe.2018056},
    url = {https://www.aimsciences.org/article/id/34e895f3-030d-45ae-a0a2-e38cd44d1883}
}

@misc{cavalleri_2024,
      title={A phase field model of Cahn-Hilliard type for tumour growth with mechanical effects and damage}, 
      author={Cavalleri, Giulia},
      year={2024},
      eprint={2409.14817},
      archivePrefix={arXiv},
      primaryClass={math.AP},
      url={https://arxiv.org/abs/2409.14817}, 
}

@article {Nirenberg_59,
    AUTHOR = {Nirenberg, Louis},
     TITLE = {On elliptic partial differential equations},
   JOURNAL = {Ann. Scuola Norm. Sup. Pisa Cl. Sci. (3)},
  FJOURNAL = {Annali della Scuola Normale Superiore di Pisa. Classe di
              Scienze. Serie III},
    VOLUME = {13},
      YEAR = {1959},
     PAGES = {115--162},
      ISSN = {0391-173X},
   MRCLASS = {35.00},
  MRNUMBER = {109940},
MRREVIEWER = {L.\ Garding},
}

@article{Simon_86,
    author = {Simon, Jacques},
    year = {1986},
    pages = {65-96},
    title = {Compact sets in the space $L^p(0,T; B)$},
    volume = {146},
    journal = {Ann. Mat. Pura Appl.}, 	
    fjournal = {Annali di Matematica Pura ed Applicata}, 
    doi = {10.1007/BF01762360}
}

@article{Delgado_2016,
  title={Survival in glioblastoma: a review on the impact of treatment modalities},
  author={Delgado-L{\'o}pez, P.D. and Corrales-Garc{\'\i}a, E.M.},
  journal={Clin. Transl. Oncol},
  fjournal={Clinical and Translational Oncology},
  volume={18},
  number={11},
  pages={1062--1071},
  year={2016},
  publisher={Springer}
}

@article{Szopa_2017,
  title={Diagnostic and therapeutic biomarkers in glioblastoma: current status and future perspectives},
  author={Szopa, Wojciech and Burley, Thomas A and Kramer-Marek, Gabriela and Kaspera, Wojciech},
  journal={Biomed Res. Int.},
  fjournal={BioMed research international},
  volume={2017},
  number={1},
  pages={ Paper No. 8013575},
  year={2017},
  publisher={Wiley Online Library},
  doi= {https://doi.org/10.1155/2017/8013575}
}

@article{Cloutier2009,
  author    = {Cloutier, Mathieu and Bolger, Fiachra B. and Lowry, John P. and Wellstead, Peter},
  title     = {An integrative dynamic model of brain energy metabolism using in vivo neurochemical measurements},
  journal   = {J. Comput. Neurosci.},
  fjournal   = {Journal of Computational Neuroscience},
  volume    = {27},
  year      = {2009},
  number    = {3},
  pages     = {391--414},
  doi       = {10.1007/s10827-009-0152-8},
  url       = {https://doi.org/10.1007/s10827-009-0152-8}
}

@article{Aubert_05,
    author = {Agnès Aubert  and Robert Costalat  and Pierre J. Magistretti  and Luc Pellerin },
    title = {Brain lactate kinetics: Modeling evidence for neuronal lactate uptake upon activation},
    journal = {Proc. Natl. Acad. Sci. U.S.A.},
    fjournal = {Proceedings of the National Academy of Sciences},
    volume = {102},
    number = {45},
    pages = {16448--16453},
    year = {2005},
    doi = {10.1073/pnas.0505427102},
    URL = {https://www.pnas.org/doi/abs/10.1073/pnas.0505427102}
}

@article{Cherfils_etal_2021,
    title = {Analysis of a model for tumor growth and lactate exchanges in a glioma}, 
    author = {Laurence Cherfils and Stefania Gatti and Alain Miranville and Rémy Guillevin},
    journal = {Discrete Contin. Dyn. Syst. - S},
    fjournal = {Discrete and Continuous Dynamical Systems - S},
    volume = {14},
    number = {8},
    pages = {2729--2749},
    year = {2021},
    issn = {1937-1632},
    doi = {10.3934/dcdss.2020457},
    url = {https://www.aimsciences.org/article/id/4a9d4548-a228-4e33-91c8-27a4bd637657}
}

@article{Cherfils_etal_2024,
    title = {Optimal control of therapies on a tumor growth model with brain lactate kinetics},
    author = {Laurence Cherfils and Stefania Gatti and Alain Miranville and Hussein Raad and Rémy Guillevin},
    journal = {Discrete Contin. Dyn. Syst.},
    fjournal = {Discrete and Continuous Dynamical Systems - S},
    volume = {17},
    number = {7},
    pages = {2298--2322},
    year = {2024},
    issn = {1937-1632},
    doi = {10.3934/dcdss.2024032},
    url = {https://www.aimsciences.org/article/id/65f95d01c26d215a605275c5}
    }

@article{Sonveaux_etal_08,
    author = {Pierre Sonveaux AND Frédérique Végran AND Thies Schroeder AND Melanie C. Wergin AND Julien Verrax AND Zahid N. Rabbani AND Christophe J. De Saedeleer AND Kelly M. Kennedy AND Caroline Diepart AND Bénédicte F. Jordan AND Michael J. Kelley AND Bernard Gallez AND Miriam L. Wahl AND Olivier Feron AND Mark W. Dewhirst}, 	
    journal = {J. Clin. Investig.},
    fjournal = {The Journal of Clinical Investigation},
    publisher = {The American Society for Clinical Investigation},
    title = {Targeting lactate-fueled respiration selectively kills hypoxic tumor cells in mice},
    year = {2008},
    volume = {118},
    url = {https://www.jci.org/articles/view/36843},
    pages = {3930--3942},
    number = {12},
    doi = {10.1172/JCI36843},
    url = {https://doi.org/10.1172/JCI36843}
}

@article{Guyon_etal_2022,
    author = {Guyon, Joris and Fernandez‐Moncada, Ignacio and Larrieu, Claire M and Bouchez, Cyrielle L and Pagano Zottola, Antonio C and Galvis, Johanna and Chouleur, Tiffanie and Burban, Audrey and Joseph, Kevin and Ravi, Vidhya M and Espedal, Heidi and R\o{}sland, Gro Vatne and Daher, Boutaina and Barre, Aur\'{e}lien and Dartigues, Benjamin and Karkar, Slim and Rudewicz, Justine and Romero‐Garmendia, Irati and Klink, Barbara and Gr\"{u}tzmann, Konrad and Derieppe, Marie‐Alix and Molini\'{e}, Thibaut and Obad, Nina and L\'{e}on, C\'{e}line and Seano, Giorgio and Miletic, Hrvoje and Heiland, Dieter Henrik and Marsicano, Giovanni and Nikolski, Macha and Bjerkvig, Rolf and Bikfalvi, Andreas and Daubon, Thomas},
    title = {Lactate dehydrogenases promote glioblastoma growth and invasion via a metabolic symbiosis},
    journal = {	EMBO Mol. Med.},
    fjournal = {EMBO Molecular Medicine},
    volume = {14},
    number = {12},
    pages = {e15343},
    keywords = {antiepileptic drug, energy metabolism, glioblastoma, invasion, lactate dehydrogenases},
    doi = {https://doi.org/10.15252/emmm.202115343},
    url = {https://www.embopress.org/doi/abs/10.15252/emmm.202115343},
    year = {2022}
}

@article{Tang_etal_2021,
    author = {Tang, Yan and Jia, Changhao and Wang, Yu and Wan, Wenjun and Li, Hui and Huang, Gui and Zhang, Xuenong},
    title = {Lactate Consumption via Cascaded Enzymes Combined VEGF siRNA for Synergistic Anti-Proliferation and Anti-Angiogenesis Therapy of Tumors},
    journal = {Adv. Healthc. Mater.},
    fjournal = {Advanced Healthcare Materials},
    volume = {10},
    number = {19},
    pages = { Paper No. 2100799},
    keywords = {antiangiogenesis, antiproliferation, cascaded dual-enzyme systems, combined therapy, lactate consumption, siVEGF},
    doi = {https://doi.org/10.1002/adhm.202100799},
    url = {https://onlinelibrary.wiley.com/doi/abs/10.1002/adhm.202100799},
    year = {2021}
}

@article{Colli_etal_2021,
  title={Optimal control of cytotoxic and antiangiogenic therapies on prostate cancer growth},
  author={Colli, Pierluigi and Gomez, Hector and Lorenzo, Guillermo and Marinoschi, Gabriela and Reali, Alessandro and Rocca, Elisabetta},
  journal={Math. Models Methods Appl. Sci.},
  fjournal={Mathematical Models and Methods in Applied Sciences},
  volume={31},
  number={07},
  pages={1419--1468},
  year={2021},
  publisher={World Scientific}
}

@article{Garcke_etal_2018,
  title={Optimal control of treatment time in a diffuse interface model of tumor growth},
  author={Garcke, Harald and Lam, Kei Fong and Rocca, Elisabetta},
  journal={Appl. Math. Optim.},
  fjournal={Applied Mathematics \& Optimization},
  volume={78},
  pages={495--544},
  year={2018},
  publisher={Springer}
}

@article{Benosman_2015,
  author = {Benosman, Chaabane and Aïnseba, Boubacar and Ducrot, Arnaud},
  title = {Optimization of Cytostatic Leukemia Therapy in an Advection–Reaction–Diffusion Model},
  journal = {J. Optim. Theory Appl.},
  fjournal = {Journal of Optimization Theory and Applications},
  volume = {167},
  number = {1},
  pages = {296--325},
  year = {2015},
  doi = {10.1007/s10957-014-0667-7},
  url = {https://doi.org/10.1007/s10957-014-0667-7}
}

@article{Signori_2020,
  author = {Signori, Andrea},
  title = {Optimal Distributed Control of an Extended Model of Tumor Growth with Logarithmic Potential},
  journal = {Appl. Math. Optim.},
  fjournal = {Applied Mathematics and Optimization},
  volume = {82},
  number = {2},
  pages = {517--549},
  year = {2020},
  doi = {10.1007/s00245-018-9538-1},
  url = {https://doi.org/10.1007/s00245-018-9538-1}
}

@article{Ebenbeck_Knopf_2019,
  author = {Matthias Ebenbeck and Patrik Knopf},
  title = {Optimal Medication for Tumors Modeled by a Cahn--Hilliard--Brinkman Equation},
  journal = {Calc. Var.},
  fjournal = {Calculus of Variations and Partial Differential Equations},
  volume = {58},
  number = {5},
  pages = {131},
  year = {2019},
  doi = {10.1007/s00526-019-1579-z},
  url = {https://doi.org/10.1007/s00526-019-1579-z}
}

@Article{Garcke_Lam_2016,
    author = {Harald Garcke and  Kei Fong Lam},
    title = {Global weak solutions and asymptotic limits of a Cahn–Hilliard–Darcy system modelling tumour growth},
    journal = {AIMS Math.},
    fjournal = {AIMS Mathematics},
    volume = {1},
    number = {3},
    pages = {318-360},
    year = {2016},
    issn = {2473-6988},
    doi = {10.3934/Math.2016.3.318},
    url = {https://www.aimspress.com/article/doi/10.3934/Math.2016.3.318}
}

@unpublished{Cavalleri_Colli_Miranville_Rocca_2025,
      title={On a Brain Tumor Growth Model with Lactate Metabolism, Viscoelastic Effects, and Tissue Damage}, 
      author={Giulia Cavalleri and Pierluigi Colli and Alain Miranville and Elisabetta Rocca},
      year={2025},
      eprint={2502.02126},
      archivePrefix={arXiv},
      primaryClass={math.AP},
      url={https://arxiv.org/abs/2502.02126}, 
}

\end{document}